\newtheorem{theorem}{Theorem}
\newtheorem{lemma}[theorem]{Lemma}
\newtheorem{corollary}[theorem]{Corollary}
\newtheorem{assumption}{A}
\newcommand{\beq}{\begin{equation}}
\newcommand{\eeq}{\end{equation}}
\newcommand{\beqa}{\begin{eqnarray}}
\newcommand{\eeqa}{\end{eqnarray}}
\newcommand{\beqas}{\begin{eqnarray*}}
\newcommand{\eeqas}{\end{eqnarray*}}
\newcommand{\bi}{\begin{itemize}}
\newcommand{\ei}{\end{itemize}}
\newcommand{\ba}{\begin{array}}
\newcommand{\ea}{\end{array}}
\newcommand{\bbr}{\Bbb{R}}
\title{
Linearly Convergent First-Order Algorithms\\ for Semi-definite Programming
%\footnote{The manuscript is available on www.optimization-online.org.}
%\thanks{The author was partially supported by
%    NSF AWARD CMMI-1000347.}
}
\author{
   Cong D. Dang, Guanghui Lan
    \thanks{Department of Industrial and Systems Engineering,
    University of Florida, Gainesville, FL 32611,
       (email: {\tt glan@ise.ufl.edu}). }
}
\begin{document}

\maketitle

\begin{abstract}
In this paper, we consider two formulations for Linear Matrix Inequalities (LMIs) under Slater type constraint qualification assumption, namely, SDP smooth and non-smooth formulations. We also propose two first-order linearly convergent algorithms for solving these formulations. Moreover, we introduce a bundle-level method which converges linearly uniformly for both smooth and non-smooth problems and does not require any smoothness information. The convergence properties of these algorithms are also discussed. Finally, we consider a special case of LMIs, linear system of inequalities, and show that a linearly convergent algorithm can be obtained under a weaker assumption. 
\vspace{.1in}

\noindent {\bf Keywords: Semi-definite Programming, Linear Matrix Inequalities, Error Bounds, Linear Convergence}

\end{abstract}

\vspace{0.1cm}
\setcounter{equation}{0}
\section{Introduction} \label{sec_intro}
Semi-definite Programming (SDP) is one of most interesting branches of mathematical programming in last twenty years. Semi-definite Programming can be used to model many practical problems in vary fields such as Convex constrained Optimization, Combinatorial Optimization, Control Theory,... We refer to \cite{VanBoy94} for a general survey and applications of SDP. Algorithms for solving SDP have been explosively studied since a major works are made by Nesterov and Nemirovski \cite{nene88_te1}, \cite{nene90_te1}, \cite{nene90_te2}, \cite{nene91_te1}, in which they showed that Interior Point (IP) methods for Linear Programming (LP) can be extended to SDP. Related topics can be found in \cite{StZh01}, \cite{LuStZh98}. Despite the fact that SDP can be solved in polynomial time by IP methods, they become impractical when the number of constraints increase because of computational cost per each iteration. Recently, first-order methods are focused because of the efficiency in solving large scale SDP such as Nesterov's optimal methods \cite{Nest05-1}, \cite{Nest06-1}, Nemirovski's prox-method \cite{Nem05-1} and spectral bundle methods \cite{HeRe99}.    

In system and control theory, system identification and signal processing, Semi-definite Programmings are used in context of Linear Matrix Inequalities constraints (LMIs), see \cite{BoElFeBa94}, \cite{BaVa98}. LMIs can also be solved numerically by recent interior point methods for semi-definite programming, see \cite{HeReVaWo96}, \cite{NeNe94}.  

Linear Programming is a special case of Semidefinite Programming, as well as Linear system of inequalities is a special case of Linear Mtrix Inequalities. Hence, any algorithms for SDP can be applied for solving LP. In this paper, we propose a linearly convergent algorithm for Linear system of inequalities, which require a weaker assumption than the one for LMIs problem. We refer to \cite{LeLe08} for other linear convergent algorithms for Linear system of inequalities.    

Error bounds usually play an important role in algorithmic convergence proofs. In particular, Luo and Tseng showed the power of error bound idea in deriving the linear convergent rate in many algorithm for variety class of problems, see \cite{LuTs92}, \cite{LuTs93a}, \cite{LuTs95}. However, it is not easy to obtain an error bound except in linear and quadratic cases, or when the Slater constraint qualification condition holds, see \cite{DeHu99}. In \cite{Zhang99}, Zhang derived error bounds for general convex conic problem under some various conditions. The error bound for Semidefinite Programming was studied by Deng and Hu in\cite{DeHu99}, Jourani and Ye in\cite{JoYe05}. Related topics can be found in \cite{Pang97}, \cite{StZh01}, \cite{LuStZh98}.

The paper is organized as follows. In Section 2, we introduce the problem of interest and the Slater constraint constraint qualification condition is made. Respectively, in Section 3 and Section 4, we present a non-smooth SDP optimization and a smooth SDP formulation and propose two different linearly convergent first order algorithms for solving these formulations. The iteration complexity for these algorithms are also derived. An uniformly linearly convergent algorithm for both formulations and its convergence properties are presented in Section 5. We also discuss about a special cases of LMIs, the linear system of inequalities in Section 6. Finally, we have some conclusions and remarks in the last section. 

\section{The problem of interest}
In this section, we first discuss about the relationship between a primal-dual SDP problem and a LMI. In particular, any primal-dual SPD problem can be represented by a LMI problem. 

Given a given linear operator ${\mathcal A}: \bbr^n \rightarrow {\mathcal S}^n$, vectors $c \in \bbr^n$ and matrix $b \in {\mathcal S}^n$, we consider the SDP problem
\beq \label{p}
\mathop {\min }\limits_x \left\{ {\left\langle {c,x} \right\rangle : {\mathcal A}x \preceq  B} \right\}
\eeq
and its associated dual problem
\beq \label{d}
\mathop {\max }\limits_y \left\{ {\left\langle {b,y} \right\rangle :{\mathcal A}^T y = c,y \preceq  0} \right\}
\eeq
 where $y \in {\mathcal S}^n.$ We make the following assumption.
 \begin{assumption} \label{feasiblePD}
 Both primal and dual SDP problems \eqref{p} and \eqref{d} are strictly feasible.
 \end{assumption}
 It is well known that in view of Assumption~\ref{feasiblePD}, the pair of primal and dual SDP problem \eqref{p} and \eqref{d} satisfy the Slater's condition, hence they have optimal solutions and their associated gap duality is zero, see \cite{BenNem00}. Moreover, a primal-dual optimal solution of \eqref{p} and \eqref{d} can be found by solving the complementarity problem as following Linear Matrix Inequalities constraints
\[
\left\{ \begin{array}{l}
 {\mathcal A}x \le B \\ 
 {\mathcal A}^T y = c \\ 
 y \le 0 \\ 
 \left\langle {x,c} \right\rangle  - \left\langle {B,y} \right\rangle  \le 0. \\ 
 \end{array} \right.
\]
Note that a system of Linear Matrix Inequalities (LMIs) is equivalent to a single LMI because of the simple fact that a system of LMIs can be easily represented by a single LMI, see \cite{BenNem00}. For convenience, from now on we just consider a single LMI problem.

Given a symmetric metrix $B$ and a linear operator $\mathcal{A} : \bbr^n \rightarrow \mathcal{S}^n$ as follow 
$$\mathcal{A}x=A_1x_1+...+A_nx_n,$$ 
where $\mathcal{S}^n$ denotes the set of $n \times n$ symmetric matrices and $A_1,A_2,...,A_n \in \mathcal{S}^n$, the problem of interest in this paper is finding a feasible solution $x \in \bbr^n$ to the %
%focus on solving the a 
Linear Matrix Inequality, assume that the feasible solution set $S$ is nonempty, 
\beq \label{SDP_inq}
\mathcal{A}x-B \preceq 0. 
\eeq
The Linear Matrix Inequality \eqref{SDP_inq} can be represented in the conic form
\beq \label{SDP_equivalent}
\mathcal{\tilde A}-B \in {\mathcal S}^n_-,
\eeq
where $\mathcal{\tilde A}$ is the span of $\{A_1,A_2,...,A_n\}.$ The following assumption is made throughout the paper
\begin{assumption} \label{assumption1} There exist $\sigma>0$ and $d \in \bbr^n$ such that
$$\sigma I_n - \mathcal{A}d \in \mathcal{S}^n_-,$$
and denote 
\beq \label{mu}
\mu=\frac{\| d\|}{\sigma}.
\eeq
\end{assumption}
Note that the Assumption~\ref{assumption1} implies the Slater constraint qualification condition for the feasible set of \eqref{SDP_equivalent}, hence $S$ is nonempty, see \cite{JoYe05}, \cite{Zhang99}, \cite{DeHu99}, \cite{De97}. In Section 2 and Section 3, we will present two equivalent SDP optimization formulations of LMI and linearly convergent algorithms for solving these formulations.
\section{A non-smooth SDP Optimization Formulation for LMI}
In this section, we introduce a non-smooth SDP Optimization formulation for the Linear Matrix Inequality \eqref{SDP_inq}. We also propose a linearly convergence algorithm for solving the non-smooth formulation and present the main convergence behavior of this algorithm.\\%Under the Assumption~\ref{assumption1}, we are still interested in finding a point $x \in \bbr^n$ such that
%\beq \label{SDP_ieq2}
%\mathcal{E}(x)-B \preceq 0.
%\eeq

Consider the alternative optimization problem that minimizing over $\bbr^n$ the objective function
\beq \label{SDP_nonsmooth}
f(x)=\max \{ \lambda_1(\mathcal{A}x-B),0,\}
\eeq
where $\lambda_1(\mathcal{A}x-B)$ denotes the maximum eigenvalue of $\mathcal{A}x-B.$ Clearly, the objective function is not differentiable and the problem \eqref{SDP_nonsmooth} is non-smooth. Note the, computing the value and the subgradient of the objective function requires to find a maximal eigenvalue and its associated eigenvectors. %computing the sub-gradient of $f(x)$ requires a matrix decomposition. Under the Assumption~\ref{assumption1},
 The objective function $f(x)$ is Lipschitz continuous, i.e. for any $g \in \partial f(x),$ there exists a positive number $M$ such that
$$\| g(x)\| \le M \; \forall x \in \bbr^n.$$ The constant $M$ can be computed as follows
\beq \label{M_def}
M=\| \mathcal{A}\|=\sqrt{\sum_{i=1}^n \| A_i\|^2}, 
\eeq
where $\| A_i\|$ is operator norm (spectral norm or F-norm).\\

Furthermore, the two problem \eqref{SDP_nonsmooth} and \eqref{SDP_inq} are equivalent in the following sense. It is not difficult to see that, if $x^*$ is an optimal solution to \eqref{SDP_nonsmooth}, then $x^*$ is also a feasible solution to \eqref{SDP_inq} and vice versa. In addition, the optima value of \eqref{SDP_nonsmooth} is $F^*=0$. \\

%In this section, we propose a linearly convergent algorithm for solving \eqref{SDP_nonsmooth} under the Assumption~\ref{assumption1}.
%\begin{assumption} \label{assumption2} For any $g \in \partial f(x),$ there exists a positive number $M$ such that
%$$\| g(x)\| \le M \; \forall x \in \bbr^n.$$
%\end{assumption}
Denote $X^*$ by the optimal solution set of \eqref{SDP_nonsmooth}. The following technical lemma describes the relation between the distance from an arbitrary point $x$ to the the optimal set $X^*$ and the objective function value at that point.
\begin{lemma}
For any $x \in \bbr^n$, we have
\beq \label{SDP_growth2}
d(x,X^*) \le \mu f(x), 
\eeq
where $X^*$ is the feasible solution set of \eqref{SDP_inq}.
\end{lemma}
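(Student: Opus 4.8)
The plan is to prove the bound by exhibiting, for each $x\in\bbr^n$, an explicit feasible point $\bar x\in X^*$ whose distance to $x$ is at most $\mu f(x)$. If $x$ is already feasible then $f(x)=0$ and $d(x,X^*)=0$, so the inequality holds trivially; hence assume $f(x)>0$, i.e. $f(x)=\lambda_1(\mathcal A x-B)$. The natural candidate is to move from $x$ along the direction of the Slater point guaranteed by Assumption~\ref{assumption1}: set $\bar x=x-t\,d$ for a suitable $t>0$, and choose $t$ just large enough that $\mathcal A\bar x-B$ becomes negative semidefinite.

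The key step is the following semidefinite-order estimate. By definition of the maximal eigenvalue we have $\mathcal A x-B\preceq \lambda_1(\mathcal A x-B)\,I_n\preceq f(x)\,I_n$. Assumption~\ref{assumption1} states $\sigma I_n-\mathcal A d\in\mathcal S^n_-$, i.e. $\mathcal A d\succeq \sigma I_n$, so that $-t\,\mathcal A d\preceq -t\sigma\,I_n$ for every $t\ge 0$. Adding these two inequalities (the order $\preceq$ is preserved under adding a positive semidefinite matrix) gives
\[
\mathcal A\bar x-B=(\mathcal A x-B)-t\,\mathcal A d\preceq \bigl(f(x)-t\sigma\bigr)\,I_n .
\]
Choosing $t=f(x)/\sigma\ge 0$ makes the right-hand side $0$, hence $\mathcal A\bar x-B\preceq 0$ and $\bar x\in X^*$.

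Finally, since $\bar x\in X^*$,
\[
d(x,X^*)\le \|x-\bar x\|=t\,\|d\|=\frac{f(x)}{\sigma}\,\|d\|=\mu\, f(x),
\]
using the definition $\mu=\|d\|/\sigma$ from \eqref{mu}. There is no serious obstacle here; the only points needing (minor) care are the reduction to the infeasible case, the validity of $\mathcal A x-B\preceq f(x) I_n$ for all $x$ (which uses $f(x)\ge\lambda_1(\mathcal A x-B)$), and the monotonicity of the semidefinite order under adding a positive semidefinite matrix. Everything else is a one-line substitution.
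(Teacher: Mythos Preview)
Your proof is correct. The paper's own argument for the nontrivial case $x\notin X^*$ is simply a one-line citation of Corollary~1 in \cite{JoYe05}, treating the error bound as a known black-box result; you instead give a direct, self-contained construction by moving from $x$ along the direction $-d$ supplied by Assumption~\ref{assumption1} until feasibility is reached. This is a genuinely different route: your argument is the classical Slater-direction proof of a Robinson-type error bound specialized to the semidefinite cone, and it has the advantage of being elementary and of making completely explicit how the constant $\mu=\|d\|/\sigma$ arises. The paper's proof is shorter on the page but relies on an external reference; in the present setting that reference is almost certainly proved by essentially the same perturbation argument you wrote out. One small terminological note: calling $d$ a ``Slater point'' is slightly loose, since Assumption~\ref{assumption1} does not involve $B$ and $d$ need not be feasible for \eqref{SDP_inq}; it is really a direction along which $\mathcal A$ is uniformly positive definite. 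This does not affect the correctness of your computation.
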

\begin{proof}
Note that $X^*$ is also the optimal set of minimizing \eqref{SDP_nonsmooth}. We consider the following two cases.

\noindent{\bf Case 1:} $x \in X^*.$ Obviously, $d(x,X^*)=0$ and
$$f(x)=\min \{ \lambda_1(\mathcal{A}x-B),0\}=0.$$
That implies \eqref{SDP_growth2} is true for any $x \in X^*.$

\noindent{\bf Case 2:} $x \notin X^*.$ Clearly, the result is implied by Corollary 1 in \cite{JoYe05}.

The Lemma immediately follows from two cases.
\end{proof}
The relation \eqref{SDP_growth2} is also called the growth condition of the objective function. We now ready to describe our non-smooth algorithm as follows. Each main Step (Step 1), to obtain the new iterate, we run the sub-gradient method (see \cite{Nest04}) for $K=4M^2\mu^2,$ where $\mu$ is defined in \eqref{mu}, with the input is the current iterate. In other words, we restart the sub-gradient algorithm after a constant number $K=4M^2\mu^2$ of iterations. We denote $\{x_k\}, k=0,1,...$ by the sequence obtained by our algorithm and $\{\bar x_i\}, i=0,1,...$ by the sequence obtained by sub-gradient method in the Step 1. The non-smooth algorithm scheme is described as follows.\\

%\begin{algorithm}{The SDP Non-Smooth Algorithm}

%Step 0: Initial solution $x_0 \in \bbr^n.$

%Step 1: $k^{th}$ iteration, $k \ge 1.$

%\qquad \qquad Run sub-gradient algorithm with initial solution $\bar x_0=x_{k-1}$ for $K=4M^2\mu^2$ iterations.

%\qquad \qquad $f^*_k:=\min_{i=1,...,K} f(\bar x_i)$.

%\qquad \qquad $x_k:=\bar x$ such that $f(\bar x) =f^*_k $.

%Step 2: Go to Step 1.

%\end{algorithm} 

{\bf The SDP Non-Smooth Algorithm:}

{\bf Input:} $x_0^u \in \bbr^n$.

{\bf Output:} $x_k \in \bbr^n.$

1) $k^{th}$ iteration, $k \ge 1.$

\qquad \qquad Run sub-gradient algorithm with initial solution $\bar x_0=x_{k-1}$ for $K=4M^2\mu^2$ iterations.

\qquad \qquad $f^*_k:=\min_{i=1,...,K} f(\bar x_i)$.

\qquad \qquad $x_k:=\bar x$ such that $f(\bar x) =f^*_k $.

2) Go to Step 1.\\

Similar to the smooth algorithm, the non-smooth algorithm is definitely different from running sub-gradient method for multiple times because of the restarting of parameters. In order to prove the main convergence result of our algorithm, we show in the following lemma that the sub-gradient method applied in Step 1 has $O(\frac{1}{\sqrt{K}}) $ rate of convergence.
\begin{lemma} \label{subgrad_convergence}
Suppose that $\{\bar x_i, i=1,2,..,K\}$ is generated by sub-gradient method in each main Step. Then we have
$$\min_{i=1,2,...,K}f(\bar x_i)-f^* \le \frac{M d(\bar x_0,X^*)}{\sqrt{K}},$$
where $X^*$ is optimal solution set of \eqref{SDP_nonsmooth} and $M$ is defined in \eqref{M_def}.
\end{lemma}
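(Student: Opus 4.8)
The plan is to invoke the standard convergence estimate for the subgradient method with a suitably chosen (constant or diminishing) stepsize, applied to the convex nonsmooth function $f$ whose optimal value is $F^*=0$. First I would recall the basic recursion: if $\bar x_{i+1} = \bar x_i - \gamma_i g_i$ with $g_i \in \partial f(\bar x_i)$, then for any $x^* \in X^*$ one has $\|\bar x_{i+1} - x^*\|^2 \le \|\bar x_i - x^*\|^2 - 2\gamma_i (f(\bar x_i) - f^*) + \gamma_i^2 \|g_i\|^2$. Summing this telescoping inequality over $i=1,\dots,K$, using $\|g_i\| \le M$ from \eqref{M_def}, dropping the nonnegative terminal term $\|\bar x_{K+1}-x^*\|^2$, and picking $x^*$ to be the projection of $\bar x_0$ onto $X^*$ so that $\|\bar x_0 - x^*\| = d(\bar x_0,X^*)$, yields
\[
\min_{i=1,\dots,K}\big(f(\bar x_i)-f^*\big)\sum_{i=1}^K \gamma_i \;\le\; \frac{1}{2}\,d(\bar x_0,X^*)^2 + \frac{M^2}{2}\sum_{i=1}^K \gamma_i^2 .
\]
Then I would choose the constant stepsize $\gamma_i \equiv \gamma = d(\bar x_0,X^*)/(M\sqrt{K})$, which after substitution gives exactly $\min_{i} (f(\bar x_i)-f^*) \le M\,d(\bar x_0,X^*)/\sqrt{K}$, the claimed bound. (One could equally use the normalized stepsize $\gamma_i = d(\bar x_0,X^*)/(M\sqrt{K})$ or Polyak-type stepsizes; the constant choice is cleanest and matches the $O(1/\sqrt{K})$ claim.)

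The main point to be careful about is the choice of stepsize policy actually used inside Step~1 of the algorithm and whether the estimate needs $d(\bar x_0,X^*)$ to be known in advance: with the constant stepsize above the quantity $d(\bar x_0,X^*)$ appears in $\gamma$, so strictly speaking one either assumes an upper bound on this distance is available or uses a stepsize rule (e.g.\ $\gamma_i = c/\sqrt{K}$ with the bound absorbed) that still delivers the stated rate. Since the paper's outer loop runs for the fixed count $K=4M^2\mu^2$ and then restarts, the relevant fact for the convergence theorem is only the $O(d(\bar x_0,X^*)/\sqrt K)$ decay, so I would state the stepsize as the normalized rule and note the resulting bound. A second minor point is that $f^*=F^*=0$, which was established just before the algorithm description, so $f(\bar x_i)-f^* = f(\bar x_i)$; I would use this to simplify but it is not essential to the inequality itself.

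I do not expect any genuine obstacle here — this is the textbook subgradient analysis (as in \cite{Nest04}); the only "work" is bookkeeping the telescoping sum and substituting the stepsize. The lemma will then feed, together with the growth condition \eqref{SDP_growth2}, into the geometric-decay argument for the outer iterates: combining $\min_i f(\bar x_i) \le M\,d(\bar x_0,X^*)/\sqrt K$ with $d(\bar x_0,X^*) \le \mu f(\bar x_0)$ and $K = 4M^2\mu^2$ gives $f(x_k) \le \tfrac12 f(x_{k-1})$, i.e.\ linear convergence, but that is the content of the subsequent theorem rather than of this lemma.
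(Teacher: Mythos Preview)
Your proposal is correct and follows essentially the same telescoping-plus-constant-stepsize argument as the paper. The one noteworthy difference is that the paper applies the growth condition \eqref{SDP_growth2} \emph{inside} the proof, replacing $\|\bar x_0-x^*\|^2$ by $\mu^2 f(\bar x_0)^2$ before optimizing over $\gamma$; this yields the implementable stepsize $\gamma=\mu f(\bar x_0)/M$ (depending only on the computable quantity $f(\bar x_0)$) and the final bound $M\mu f(\bar x_0)/\sqrt{K}$, whereas your stepsize $d(\bar x_0,X^*)/(M\sqrt{K})$ requires the unknown distance --- exactly the issue you flagged. In effect the paper resolves your caveat by folding the error bound into the lemma itself rather than deferring it to the outer theorem; otherwise the two proofs are identical.
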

\begin{proof}
For any $i \ge 1 $ and $x^* \in X^*,$ we have
$$\frac{1}{2}\|\bar x_{i+1}-x^* \|^2=\frac{1}{2}\| \bar x_i-x*\|^2-\gamma_i \langle g(\bar x_i),\bar x_i-x^* \rangle + \frac{\gamma_i^2}{2} \| g(\bar x_i)\|^2$$
or
$$\gamma_i \langle g(\bar x_i),\bar x_i-x^* \rangle =\frac{1}{2}\| \bar x_i-x*\|^2-\frac{1}{2}\|\bar x_{i+1}-x^* \|^2+ \frac{\gamma_i^2}{2} \| g(\bar x_i)\|^2.$$
Because the objective function is convex, then
$$f(\bar x_i)-f(x^*) \le \langle g(\bar x_i),\bar x_i-x^* \rangle,$$
that implies
$$\gamma_i [f(\bar x_i)-f(x^*)] \le \frac{1}{2}\| \bar x_i-x*\|^2-\frac{1}{2}\|\bar x_{i+1}-x^* \|^2+ \frac{\gamma_i^2}{2} \| g(\bar secx_i)\|^2.$$
Summing up the above inequalities we obtain
\begin{align*}
\sum_{i=1}^K \gamma_i [f(\bar x_i)-f(x^*)] &\le \frac{1}{2}\| \bar x_0-x*\|^2 -\frac{1}{2}\| \bar x_i-x*\|^2 + \frac{1}{2}\sum_{i=1}^K \gamma_i^2 \| g(\bar x_i)\|^2 \cr
&\le \frac{1}{2}\| \bar x_0-x*\|^2 + \frac{1}{2}\sum_{i=1}^K \gamma_i^2 \| g(\bar x_i)\|^2
\end{align*}
Dividing both sides to $\sum_{i=1}^K \gamma_i,$ and using Lemma~\ref{SDP_growth2}, we have
\begin{align*}
\frac{\sum_{i=1}^K \gamma_i [f(\bar x_i)-f^*]}{\sum_{i=1}^K \gamma_i} &\le \frac{\|\bar x_0-x^*\|^2}{2\sum_{i=1}^K \gamma_i}+\frac{\sum_{i=1}^K \gamma_i^2 M^2}{2\sum_{i=1}^K \gamma_k} \cr
&\le \frac{\mu^2 f^2(\bar x_0)}{2\sum_{i=1}^K \gamma_i}+\frac{\sum_{i=1}^K \gamma_i^2 M^2}{2\sum_{i=1}^K \gamma_i}
\end{align*}
We consider the constant step size $\gamma_i=\frac{\gamma}{\sqrt{K}},$ then the above relation becomes
$$\min_{i=1,2,..,K}\{f(\bar x_i)-f^*\} \le \frac{1}{2\sqrt{K}}[\frac{\mu^2 f^2(\bar x_0)}{\gamma}+M^2 \gamma].$$
Minimizing the right hand side, we find that the optimal choice is $\gamma=\frac{\mu f(\bar x_0)}{M}.$ In this case, we obtain the following rate of convergence:
$$f^*_k-f^* \le \frac{M \mu f(\bar x_0)}{\sqrt{K}}=\frac{M \mu f(x_{k-1})}{\sqrt{K}}.$$
That implies the $O(\frac{1}{\sqrt{K}})$ rate of convergence of sub-gradient method in each main Step of our algorithm.
\end{proof}
The linear convergence of our algorithm is stated in the following theorem.
\begin{theorem} \label{nonsmooth_convergence}
The sequence $\{x_k\}, k=0,1,...$ generated by the SDP smooth algorithm satisfies
$$f^*_k - f^* \le \frac{1}{2}[f^*_{k-1}-f^*], \forall k=1,2,...$$
\end{theorem}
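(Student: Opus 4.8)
The plan is to obtain the claimed contraction by combining Lemma~\ref{subgrad_convergence} with two facts already recorded in this section: the optimal value of the non-smooth formulation \eqref{SDP_nonsmooth} is $f^* = F^* = 0$, and by construction the iterate $x_{k-1}$ returned at the end of the $(k-1)$-st main step satisfies $f(x_{k-1}) = f^*_{k-1}$ (with the natural convention $f^*_0 := f(x_0)$ for the input point, so that the statement makes sense at $k=1$).

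First I would apply Lemma~\ref{subgrad_convergence} to the subgradient run inside the $k$-th main step, which starts from $\bar x_0 = x_{k-1}$ and performs $K$ iterations. Together with the growth condition \eqref{SDP_growth2}, namely $d(x_{k-1},X^*) \le \mu f(x_{k-1})$, this yields
\[
f^*_k - f^* \;\le\; \frac{M\, d(x_{k-1}, X^*)}{\sqrt{K}} \;\le\; \frac{M \mu\, f(x_{k-1})}{\sqrt{K}} .
\]
Next I would substitute $f(x_{k-1}) = f^*_{k-1}$ and $f^* = 0$ to rewrite the bound in the homogeneous form $f^*_k - f^* \le \frac{M\mu}{\sqrt{K}}\,(f^*_{k-1} - f^*)$. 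Finally, plugging in the algorithmic choice $K = 4M^2\mu^2$ gives $\sqrt{K} = 2M\mu$ and hence $\frac{M\mu}{\sqrt{K}} = \tfrac{1}{2}$, which is exactly the asserted inequality $f^*_k - f^* \le \tfrac{1}{2}(f^*_{k-1} - f^*)$.

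I do not expect a genuine obstacle: the argument is essentially a substitution into Lemma~\ref{subgrad_convergence}. The only points requiring care are bookkeeping ones — fixing the indexing convention so that $f^*_{k-1}$ is well defined at $k=1$, and checking that the stepsize used inside each main step is precisely the constant stepsize $\gamma_i = \gamma/\sqrt{K}$ with $\gamma = \mu f(\bar x_0)/M$ under which Lemma~\ref{subgrad_convergence} was established. One should also flag that the theorem statement refers to the ``SDP smooth algorithm,'' whereas it is the SDP Non-Smooth Algorithm just described that is intended.
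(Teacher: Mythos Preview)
Your proposal is correct and matches the paper's own proof essentially step for step: apply Lemma~\ref{subgrad_convergence} to the inner run started at $\bar x_0 = x_{k-1}$, use $f^*=0$, and plug in $K = 4M^2\mu^2$. The only cosmetic difference is that the paper quotes the bound already in the form $f^*_k - f^* \le M\mu f^*_{k-1}/\sqrt{K}$ (since the growth condition \eqref{SDP_growth2} was folded into the proof of Lemma~\ref{subgrad_convergence}), whereas you invoke the lemma statement and then apply \eqref{SDP_growth2} separately; and your side remarks about the $k=1$ indexing, the stepsize choice, and the ``smooth'' vs.\ ``non-smooth'' typo are all apt.
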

\begin{proof}
By convergence properties of sub-gradient algorithm, we have
$$f^*_k-f^* \le \frac{M \mu f^*_{k-1}}{\sqrt{K}}.$$
Note that $f^*=0$ and $K \ge 4M^2\mu^2$, that implies
$$f^*_k - f^* \le \frac{1}{2}[f^*_{k-1} - f^*].$$
\end{proof}
The following iteration complexity result is an immediate consequence of Theorem~\ref{nonsmooth_convergence}.
\begin{corollary}
Let $\{x_k\}$ be the sequence generated by the SDP non-smooth algorithm. Given any $\epsilon >0,$ an iterate $x_k$ satisfying $f(x_k)-f^*\le\epsilon$ can be found in no more than
$$4M^2\mu^2 log_2\frac{f(x_0)}{\epsilon}$$
iterations, where $M$ is defined in \eqref{M_def}.
\end{corollary}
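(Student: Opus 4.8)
The plan is to unroll the one-step contraction established in Theorem~\ref{nonsmooth_convergence} and then convert the number of outer (main) steps into a total iteration count by multiplying by the fixed inner-loop length $K=4M^2\mu^2$.

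First I would apply Theorem~\ref{nonsmooth_convergence} recursively. Since $f^*_k - f^* \le \frac12(f^*_{k-1}-f^*)$ for every $k\ge 1$, a trivial induction yields $f^*_k - f^* \le 2^{-k}(f^*_0 - f^*)$. Using $f^*=0$ (the optimal value of \eqref{SDP_nonsmooth}) together with the initialization convention $f^*_0 = f(x_0)$, this reads $f(x_k) = f^*_k \le 2^{-k} f(x_0)$, where the first equality holds because $x_k$ is, by construction, the iterate in the $k$-th main step attaining $f^*_k$.

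Next I would solve for the number of main steps needed to reach accuracy $\epsilon$: requiring $2^{-k} f(x_0) \le \epsilon$ gives $k \ge \log_2(f(x_0)/\epsilon)$, so $k_\epsilon := \lceil \log_2(f(x_0)/\epsilon)\rceil$ main steps suffice to guarantee $f(x_{k_\epsilon}) - f^* \le \epsilon$. Finally, since each main step performs exactly $K=4M^2\mu^2$ sub-gradient iterations, the total iteration count is $k_\epsilon K \le 4M^2\mu^2 \log_2(f(x_0)/\epsilon)$ (up to the rounding implicit in the ceiling), which is the claimed bound.

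The argument is essentially bookkeeping, so I do not anticipate a genuine obstacle; the only points requiring care are being explicit about what "iteration" refers to — here it is the inner sub-gradient iterations, which is precisely why the bound carries the factor $4M^2\mu^2$ — and about the base case $f^*_0 = f(x_0)$ of the recursion, which should be recorded as part of the algorithm's initialization. One may also wish to note the rounding: strictly the bound is $4M^2\mu^2\lceil\log_2(f(x_0)/\epsilon)\rceil$, reported in the statement in the cleaner form without the ceiling.
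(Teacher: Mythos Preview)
Your argument is correct and follows essentially the same route as the paper: unroll the halving from Theorem~\ref{nonsmooth_convergence} to bound the number of restarts by $\log_2(f(x_0)/\epsilon)$, then multiply by the fixed inner-loop length $K=4M^2\mu^2$. Your version is in fact a bit more careful than the paper's about the base case $f^*_0=f(x_0)$ and the implicit ceiling.
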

\begin{proof}
Follow Theorem~\ref{nonsmooth_convergence}, after each main Step, the objective function is decreased by one haft. That implies to obtain $\epsilon-$solution of SDP non-smooth formulation, we need $log_2\frac{f(x_0)}{\epsilon}$ restarts, then the number of iterations is 
$$4M^2\mu^2 log_2\frac{f(x_0)}{\epsilon}.$$ 
\end{proof}
%\noindent{\bf Remarks:} To obtain $\epsilon-$solution of SDP non-smooth, we need $log_2\frac{f(x_0)}{\epsilon}$ restarts and the number of iterations is 

%$$4M^2\mu^2 log_2\frac{f(x_0)}{\epsilon}.$$ 

 %{\bf Issues:

%1. How to compute $M$?

%It seems that $$M=\| \mathcal{E}\|=\sqrt{\sum_{i=1}^n \| A_i\|^2}, $$
%where $\| A_i\|$ is operator norm (spectral norm or F-norm).}
\section{A smooth SDP Optimization Formulation for LMI}
%%Linear matrix inequalities 
%In this section, we are interested in finding a feasible solution $x \in \bbr^n$ to the %
%%focus on solving the a 
%Linear Matrix Inequality, assume that the feasible solution set is nonempty, 
%\beq \label{SDP_inq}
%\mathcal{E}(x)-B \preceq 0, 
%\eeq
%where $\mathcal{E} : \bbr^n \rightarrow \mathcal{S}^n$ denotes a linear operator as follow 
%$$\mathcal{E}(x)=A_1x_1+...+A_nx_n,$$
%in which $A_1,A_2,...,A_n$ and $B$ are given $n \times n$ symmetric matrices.

%Denote ${\mathcal S}^n_-$ by the cone of $n \times n$ negative semidefinite matrices. The Linear Matrix Inequality \eqref{SDP_inq} can be represented in the conic form
%\beq \label{SDP_equivalent}
%\mathcal{\tilde E}-B \in {\mathcal S}^n_-,
%\eeq
%where $\mathcal{\tilde E}$ is the span of $\{A_1,A_2,...,A_n\}.$ The following assumption is made throughout the paper
%\begin{assumption} \label{assumption1} There exist $\sigma>0$ and $d \in \bbr^n$ such that
%$$\sigma I_n - \mathcal{E}(d) \in \mathcal{S}^n_-,$$
%and denote $$\mu=\frac{\| d\|}{\sigma}.$$
%\end{assumption}

%Clearly, [Corollary 1 - Error Bounds] show that under this Assumption, the feasible set of \eqref{SDP_equivalent} is nonempty. 

%Denote $$\mu=\frac{\| d\|}{\sigma}.$$
In this Section, we introduce a smooth SDP optimization formulation for the Linear Matrix Inequality \eqref{SDP_inq}. Consider the following  %equivalent optimization problem to \eqref{SDP_equivalent} is minimizing the 
objective function
\beq \label{SDP_obj}
f(x)=\min_{u \in \mathcal{S}^n_-} \| \mathcal{A}x-B-u\|_F^2.
\eeq
Note that $f(x)$ is the square of the distance from $\mathcal{A}x-B$ to the non-positive semidefinite matrix cone $\mathcal{S}^n_-$. Our approach to solve the LMI problem \eqref{SDP_equivalent} is solving the equivalent optimization problem
$$\min_{x \in \bbr^n} f(x).$$
It is easy to see that if $x^*$ is a feasible solution to \eqref{SDP_equivalent} then $x^*$ is an optimal solution to $\min_{x \in \bbr^n} f(x)$ and vice versa. Furthermore, if $x^*$ is a feasible solution to \eqref{SDP_equivalent} then we also have $f(x^*)=0. $
The smoothness of the objective function $f(x)$ is presented in the following Lemma. % showing that the objective function given in \eqref{SDP_obj} has a Lipschitz continuous gradient.
\begin{lemma}
Given a linear operator $\mathcal{A}: \bbr^n \rightarrow \mathcal{S}^n,$ the objective function given in \eqref{SDP_obj} has $2\|\mathcal{A}\|^2$ Lipschitz continuous gradient, where $\| \mathcal{A}\|$ denotes the operator norm of $\mathcal{A}$ with respected to the pair of norm $\| .\|_2$ and $\| .\|_F$ defined as follows
\beq \label{Edef}
\|\mathcal{A}\|:=max\{\| \mathcal{A}u\|_F^*:\|u\|\le 1 \}
\eeq
\end{lemma}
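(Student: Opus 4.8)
The plan is to read $f$ as a composition of the squared Frobenius distance to a closed convex cone with the affine map $x \mapsto \mathcal{A}x - B$, and then to combine the (standard) smoothness of that distance function with the chain rule. Set $\mathcal{K} := \mathcal{S}^n_-$ and
\[
\varphi(Z) := \min_{u \in \mathcal{K}} \|Z - u\|_F^2 = \mathrm{dist}_F^2(Z, \mathcal{K}), \qquad Z \in \mathcal{S}^n ,
\]
so that $f(x) = \varphi(\mathcal{A}x - B)$. It then suffices to show that $\varphi$ has a $2$-Lipschitz gradient on $(\mathcal{S}^n, \|\cdot\|_F)$ and to push this through the linear part $\mathcal{A}$.

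I would first record the properties of the Euclidean projection $\Pi := \Pi_{\mathcal{K}}$ onto the closed convex cone $\mathcal{K}$: it is single valued, it satisfies the variational inequality $\langle Z - \Pi(Z),\, u - \Pi(Z)\rangle \le 0$ for every $u \in \mathcal{K}$, and it is firmly nonexpansive, so that both $\Pi$ and $I - \Pi$ are nonexpansive in $\|\cdot\|_F$ (concretely, for a spectral decomposition $Z = \sum_i \lambda_i v_i v_i^T$ one has $\Pi(Z) = \sum_i \min\{\lambda_i,0\}\, v_i v_i^T$ and $Z-\Pi(Z)=\Pi_{\mathcal{S}^n_+}(Z)=\sum_i \max\{\lambda_i,0\}\, v_i v_i^T$). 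Then I would prove that $\varphi$ is Fréchet differentiable with $\nabla\varphi(Z) = 2\bigl(Z - \Pi(Z)\bigr)$ via the two-sided estimate
\[
0 \;\le\; \varphi(Z') - \varphi(Z) - \bigl\langle 2(Z - \Pi(Z)),\, Z' - Z\bigr\rangle \;\le\; \|Z' - Z\|_F^2 \qquad \text{for all } Z, Z' \in \mathcal{S}^n .
\]
The upper bound follows by using the feasible point $\Pi(Z) \in \mathcal{K}$ in the minimization defining $\varphi(Z')$ and expanding $\|Z' - \Pi(Z)\|_F^2 = \|(Z - \Pi(Z)) + (Z'-Z)\|_F^2$; the lower bound follows by writing $Z' - \Pi(Z') = (Z - \Pi(Z)) + (Z'-Z) + (\Pi(Z) - \Pi(Z'))$, expanding, and discarding the nonnegative square term together with the cross term $2\langle Z - \Pi(Z),\, \Pi(Z) - \Pi(Z')\rangle \ge 0$, which is nonnegative by the projection variational inequality applied with $u = \Pi(Z') \in \mathcal{K}$. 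Since the right-hand side is $o(\|Z'-Z\|_F)$ as $Z' \to Z$, this gives the gradient formula, and then $\|\nabla\varphi(Z_1) - \nabla\varphi(Z_2)\|_F = 2\|(I-\Pi)(Z_1) - (I-\Pi)(Z_2)\|_F \le 2\|Z_1-Z_2\|_F$ because $I - \Pi$ is nonexpansive. (Equivalently, one may just invoke that the Moreau envelope $\tfrac12\,\mathrm{dist}_F^2(\cdot,\mathcal{K})$ is $C^1$ with $1$-Lipschitz gradient $I - \Pi$.)

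Finally, since $x \mapsto \mathcal{A}x - B$ is affine with linear part $\mathcal{A}$, the chain rule gives $\nabla f(x) = \mathcal{A}^*\bigl(\nabla\varphi(\mathcal{A}x - B)\bigr)$, where $\mathcal{A}^* : \mathcal{S}^n \to \bbr^n$ is the adjoint of $\mathcal{A}$. Hence, for $x, y \in \bbr^n$, writing $Z_x = \mathcal{A}x - B$ and $Z_y = \mathcal{A}y - B$,
\[
\|\nabla f(x) - \nabla f(y)\| = \bigl\|\mathcal{A}^*\bigl(\nabla\varphi(Z_x) - \nabla\varphi(Z_y)\bigr)\bigr\| \le \|\mathcal{A}^*\|\cdot 2\,\|Z_x - Z_y\|_F = 2\,\|\mathcal{A}\|\,\|\mathcal{A}(x-y)\|_F \le 2\,\|\mathcal{A}\|^2\,\|x-y\| ,
\]
using that the operator norm is invariant under adjoints between the Euclidean spaces $(\bbr^n,\|\cdot\|)$ and $(\mathcal{S}^n,\|\cdot\|_F)$, i.e. $\|\mathcal{A}^*\| = \|\mathcal{A}\|$ with $\|\mathcal{A}\|$ as in \eqref{Edef}. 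This is exactly the asserted $2\|\mathcal{A}\|^2$-Lipschitz continuity of $\nabla f$.

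The main obstacle is the middle step: establishing that the \emph{squared} distance to $\mathcal{S}^n_-$ is genuinely Fréchet differentiable with gradient $2(Z - \Pi(Z))$ — the plain distance fails to be differentiable on the cone, so squaring is essential — for which the sandwich inequality above is the crux. Everything else (nonexpansiveness of metric projections, the affine chain rule, the identity $\|\mathcal{A}^*\| = \|\mathcal{A}\|$) is routine, provided one is careful to work throughout with the trace inner product on $\mathcal{S}^n$ so that $\Pi$ really is the Frobenius-metric projection.
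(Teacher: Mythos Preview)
Your argument is correct and follows essentially the same route as the paper: both treat $f$ as the composition of the squared Frobenius distance to $\mathcal{S}^n_-$ with the affine map $x\mapsto \mathcal{A}x-B$, use that this squared distance has a $2$-Lipschitz gradient, and then apply the chain rule with the operator norm of $\mathcal{A}$. The only difference is packaging: the paper simply cites Proposition~1 and Proposition~15 of \cite{LaLuMo11-1} for the composition bound and for the smoothness of $(\mathrm{dist}_{\mathcal{S}^n_-})^2$, whereas you spell these facts out directly via the projection variational inequality and the nonexpansiveness of $I-\Pi$.
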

\begin{proof}
The proof immediately follows the Proposition 1 of \cite{LaLuMo11-1}, in which
$$U=U^*=\bbr^n, V=V^*=\mathcal{S}^n,$$
and
$$\psi= (dist_{\mathcal{S}^n_-})^2, $$
where $dist_{\mathcal{S}^n_-}$ is the distance function to the cone $\mathcal{S}^n_-$ measured in terms of the norm $\| .\|_F$. Note that $dist_{\mathcal{S}^n_-}$ is a convex function with $2$-Lipschitz continuous gradient, see Proposition 15 of \cite{LaLuMo11-1}. 
\end{proof}
Define the Lipschitz constant of the objective function gradient by 
\beq \label{L_def}
L=2\|\mathcal{A}\|^2.
\eeq
It is easily to see that, the operator norm $\|\mathcal{A}\|$ can be computed as follows
$$\| \mathcal{A} \|= \| \mathcal{A} \|_{2,F}=\sqrt{\sum_{i=1}^n\| A_i\|_F^2}.$$
Throughout this paper, we will say that \eqref{SDP_obj} is a smooth optimization formulation of LMI problem. In next Subsections, we will describe our algorithms and discuss about their convergence behaviors. 

%\subsection{Linearly convergent algorithm for smooth formulation}
The smooth formulation can be solved by first order methods such as Nesterov's optimal method and its variants. In this Section, we propose a linearly convergent algorithm for solving the smooth formulation based on a global error bound for LMI. That error bound represents the growth condition of the objective function which is described in the following Lemma.%  The following Lemma describes the growth conditioning of the objective function.
\begin{lemma}
For any $x \in \bbr^n$, we have
\beq \label{SDP_growth}
d^2(x,X^*) \le \mu^2f(x), 
\eeq
where $X^*$ is the feasible solution set of \eqref{SDP_equivalent}.
\end{lemma}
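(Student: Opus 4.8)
The plan is to reduce the smooth growth condition \eqref{SDP_growth} to the non-smooth one \eqref{SDP_growth2}, which has already been established. The key observation is that the two objective functions share the same optimal (feasible) solution set $X^*$ (namely the feasible set $S$ of the LMI), and that the smooth objective $f(x) = \min_{u\in\mathcal{S}^n_-}\|\mathcal{A}x - B - u\|_F^2 = (\mathrm{dist}_{\mathcal{S}^n_-}(\mathcal{A}x-B))^2$ can be compared directly with the non-smooth objective $\tilde f(x) = \max\{\lambda_1(\mathcal{A}x-B),0\}$. Indeed, for any symmetric matrix $Z$ the Frobenius distance from $Z$ to $\mathcal{S}^n_-$ equals $\sqrt{\sum_{\lambda_j(Z)>0}\lambda_j(Z)^2}$, and since $0 \le \lambda_1(Z) \le \sqrt{\sum_{\lambda_j(Z)>0}\lambda_j(Z)^2} \le \sqrt{n}\,\lambda_1(Z)$ when $\lambda_1(Z)>0$, one gets the clean inequality
$$\tilde f(x)^2 \le \mathrm{dist}_{\mathcal{S}^n_-}(\mathcal{A}x-B)^2 = f(x).$$
Actually only the lower bound $\tilde f(x) \le \sqrt{f(x)}$ is needed here.

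First I would dispose of the trivial case $x \in X^*$, where $d(x,X^*)=0$ and $f(x)=0$, so \eqref{SDP_growth} holds with equality. For $x \notin X^*$, I would invoke the eigenvalue-based identity for the projection onto $\mathcal{S}^n_-$ to write $f(x) = \sum_{j:\lambda_j(\mathcal{A}x-B)>0}\lambda_j(\mathcal{A}x-B)^2 \ge \lambda_1(\mathcal{A}x-B)^2 = \tilde f(x)^2$, using that $x\notin X^*$ forces $\lambda_1(\mathcal{A}x-B)>0$. Then, applying the already-proved Lemma giving \eqref{SDP_growth2} for the non-smooth objective $\tilde f$, namely $d(x,X^*)\le \mu\,\tilde f(x)$, and squaring, I obtain
$$d^2(x,X^*) \le \mu^2\,\tilde f(x)^2 \le \mu^2 f(x),$$
which is exactly \eqref{SDP_growth}. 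Combining the two cases finishes the proof.

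The only real obstacle is making sure the distance formula $\mathrm{dist}_{\mathcal{S}^n_-}(Z)^2 = \sum_{\lambda_j(Z)>0}\lambda_j(Z)^2$ and hence the comparison $f(x)\ge \tilde f(x)^2$ is stated cleanly; this follows from the spectral decomposition of the symmetric matrix $\mathcal{A}x-B$ and the fact that the Euclidean projection onto $\mathcal{S}^n_-$ replaces each positive eigenvalue by $0$ while leaving eigenvectors fixed (the same fact underlying Proposition 15 of \cite{LaLuMo11-1} cited just above). Alternatively, one can bypass this comparison entirely and mirror the earlier proof: in the case $x\notin X^*$ appeal directly to Corollary 1 of \cite{JoYe05}, which supplies a Lipschitz-type error bound $d(x,X^*)\le \mu\,\mathrm{dist}_{\mathcal{S}^n_-}(\mathcal{A}x-B)$, and square it. Either route is short; the bulk of the work was already done in establishing \eqref{SDP_growth2}.
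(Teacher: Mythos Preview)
Your proposal is correct and is essentially the same argument as the paper's. The paper also splits into the cases $x\in X^*$ and $x\notin X^*$; in the nontrivial case it invokes Corollary~1 of \cite{JoYe05} directly to obtain $d(x,X^*)\le \mu\,\lambda_1(\mathcal{A}x-B)$ and then asserts (without the spectral-decomposition detail you supply) that $\lambda_1^2(\mathcal{A}x-B)\le \|\mathcal{A}x-B-u\|_F^2$ for every $u\in\mathcal{S}^n_-$, hence $\lambda_1^2\le f(x)$. Your route through the already-proved bound \eqref{SDP_growth2} is just a repackaging of that same citation, and your alternative at the end is exactly what the paper does.
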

\begin{proof}
Note that $X^*$ is also the optimal set of minimizing \eqref{SDP_obj}. We consider the following two cases.

\noindent{\bf Case 1:} $x \in X^*,$ then
$$d(x,X^*)=0,$$
and
$$f(x)=\min_{u \in \mathcal{S}^n_-} \| \mathcal{A}x-B-u\|^2_F=0.$$
That implies \eqref{SDP_growth} is true for any $x \in X^*.$

\noindent{\bf Case 2:} $x \notin X^*,$ then by Corollary 1 in \cite{JoYe05}, we have
$$d(x,X^*) \le \mu \lambda_1(\mathcal{A}x-B).$$
Because $x \notin X^*$ then $\lambda_1(\mathcal{A}x-B) > 0.$ It is easy to show that
$$\lambda_1^2(\mathcal{A}x-B) \le \| \mathcal{A}x-B-u\|_F^2, \; \forall u \in \mathcal{S}^n_-.$$
That implies
$$d^2(x,X^*) \le \mu^2f(x), \; \forall x \notin X^*. $$
The Lemma immediately follows from two cases.
\end{proof}

Our algorithm is described as follows. At each main step (Step 1), our algorithm run the Nesterov's optimal method (see \cite{LaLuMo11-1}, \cite{Nest05-1}) for $K=4\mu \| \mathcal{A}\| $ iterations with the input is the current iterate to obtain the new one. In other words, we restart the Nesterov's algorithm after a constant number $K$ of iterations. We denote $\{x_k\}, k=0,1,...$ by the sequence obtained by our algorithm and $\{\bar x_i\}, i=0,1,...$ by the sequence obtained by Nesterov's method in the Step 1. The scheme of our algorithm is represented as follows.\\

{\bf The SDP Smooth Algorithm:}

{\bf Input:} $x_0^u \in \bbr^n$.

{\bf Output:} $x_k \in \bbr^n.$

1) $k^{th}$ iteration, $k \ge 1.$

\qquad \qquad Run Nesterov's algorithm with initial solution $\bar x_0=x_{k-1}$ for $K=4\mu \| \mathcal{A}\| $ iterations.

\qquad \qquad $x_k:=\bar x_K$.

2) Go to Step 1.\\

%\begin{algorithm}{\bf The SDP Smooth Algorithm}

%Step 0: Initial solution $x_0 \in \bbr^n.$

%Step 1: $k^{th}$ iteration, $k \ge 1.$

%\qquad \qquad Run Nesterov's algorithm with initial solution $\bar x_0=x_{k-1}$ for $K=4\mu \| \mathcal{A}\| $ iterations.

%\qquad \qquad $x_k:=\bar x_K$.

%Step 2: Go to Step 1.

%\end{algorithm} 

Observe that the above algorithm is different from running the Nesterov's algorithm for multiple times of $K$ iterations because when we restart the Nesterov's algorithm, the parameters is also restarted. The main convergence result is stated in the following theorem.
\begin{theorem} \label{convergenSDPsmooth}
The sequence $\{x_k\}, k=0,1,...$ generated by the SDP smooth algorithm satisfies
$$f(x_k) \le \frac{1}{2}f(x_{k-1}), \forall k \ge 1.$$
\end{theorem}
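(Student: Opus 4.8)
The plan is to mimic the argument used for the non-smooth case (Theorem~\ref{nonsmooth_convergence}), replacing the subgradient method by Nesterov's optimal method and using the quadratic growth condition \eqref{SDP_growth} instead of \eqref{SDP_growth2}. The starting point is the standard convergence estimate for Nesterov's optimal method applied to a convex function with $L$-Lipschitz gradient: after $K$ iterations starting from $\bar x_0$, one has
$$f(\bar x_K) - f^* \le \frac{2 L \, d^2(\bar x_0, X^*)}{K^2},$$
where $f^*$ is the optimal value and $d(\bar x_0, X^*)$ the distance from the starting point to the optimal set. (This is the accelerated-gradient bound; it holds with the distance to the solution set in place of the distance to a fixed minimizer.) Since here $f^* = 0$, this reads $f(\bar x_K) \le 2L\, d^2(\bar x_0, X^*)/K^2$.

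Next I would feed in the growth condition. By Lemma with inequality \eqref{SDP_growth}, $d^2(\bar x_0, X^*) \le \mu^2 f(\bar x_0)$, so
$$f(\bar x_K) \le \frac{2 L \mu^2 f(\bar x_0)}{K^2}.$$
Now substitute $L = 2\|\mathcal{A}\|^2$ from \eqref{L_def} and the algorithm's choice $K = 4\mu\|\mathcal{A}\|$, so that $K^2 = 16 \mu^2 \|\mathcal{A}\|^2$ and
$$\frac{2 L \mu^2}{K^2} = \frac{4 \|\mathcal{A}\|^2 \mu^2}{16 \mu^2 \|\mathcal{A}\|^2} = \frac{1}{4} \le \frac{1}{2}.$$
Finally, recalling that in the SDP smooth algorithm $\bar x_0 = x_{k-1}$ and $x_k = \bar x_K$, this gives $f(x_k) \le \tfrac12 f(x_{k-1})$ for every $k \ge 1$, which is the claim.

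The main obstacle, and the only point that needs care, is justifying the accelerated-gradient estimate with $d^2(\bar x_0,X^*)$ — the distance to the (possibly non-singleton) solution set — rather than the distance to one fixed optimal point; this is where restarting the method's momentum parameters at the beginning of each main step is essential, since the bound is only valid for a fresh run of the method. One should cite the relevant convergence result from \cite{LaLuMo11-1} or \cite{Nest05-1} for this estimate. A secondary detail is confirming that $f$ in \eqref{SDP_obj} genuinely has the Lipschitz gradient constant $L = 2\|\mathcal{A}\|^2$, but this is already established in the preceding Lemma, so it can simply be invoked. Everything else is the routine arithmetic shown above.
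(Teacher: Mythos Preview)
Your proposal is correct and follows essentially the same argument as the paper: invoke Nesterov's accelerated convergence bound, use $f^*=0$, apply the growth condition \eqref{SDP_growth} to replace $d^2(\bar x_0,X^*)$ by $\mu^2 f(\bar x_0)$, and then substitute $L=2\|\mathcal{A}\|^2$ and $K=4\mu\|\mathcal{A}\|$. The only cosmetic difference is that the paper quotes the Nesterov bound with constant $4L/K^2$ (i.e.\ $8\|\mathcal{A}\|^2/K^2$) rather than your $2L/K^2$, so its arithmetic lands exactly on $\tfrac12$ instead of $\tfrac14\le\tfrac12$; either way the conclusion follows.
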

\begin{proof}
By convergence properties of Nesterov's algorithm (see \cite{LaLuMo11-1}, \cite{Nest05-1}), and note that $f(x)$ has $2\| \mathcal{A}\|^2$-Lipschitz continuous gradient, we have
$$f(x_k)-f^* \le \frac{8\| \mathcal{A}\|^2d^2(x_{k-1},X^*)}{K^2}.$$
Furthermore, $f^*=0,$ that implies
$$f(x_k) \le \frac{8\| \mathcal{A}\|^2d^2(x_{k-1},X^*)}{K^2}.$$
By Lemma~\ref{SDP_growth}, we have
$$d^2(x_{k-1},X^*) \le \mu^2 f(x_{k-1}).$$
Note that $K \ge 4\mu \| \mathcal{A},\|$ then
$$f(x_k) \le \frac{8\| \mathcal{A}\|^2\mu^2f(x_{k-1})}{K^2} \le \frac{1}{2}f(x_{k-1}).$$
\end{proof}
The following iteration complexity result is an immediate consequence of Theorem~\ref{convergenSDPsmooth}.
\begin{corollary}
Let $\{x_k\}$ be the sequence generated by the SDP smooth algorithm. Given any $\epsilon >0,$ an iterate $x_k$ satisfying $f(x_k)-f^*\le\epsilon$ can be found in no more than
$$4\mu \| \mathcal{A}\|log_2\frac{f(x_0)}{\epsilon}$$
iterations, where $\mathcal{A}$ is defined in \eqref{Edef}.
\end{corollary}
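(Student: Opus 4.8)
The final statement is the corollary giving the $4\mu\|\mathcal{A}\|\log_2\frac{f(x_0)}{\epsilon}$ iteration-complexity bound for the SDP smooth algorithm, which follows from Theorem~\ref{convergenSDPsmooth}. This is a routine telescoping/geometric-decay argument, so let me sketch how I'd prove it.

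The plan is as follows. First I would invoke Theorem~\ref{convergenSDPsmooth}, which states $f(x_k)\le\frac12 f(x_{k-1})$ for every $k\ge1$. Iterating this bound $N$ times gives $f(x_N)\le 2^{-N} f(x_0)$. Since $f^*=0$ (as noted just before the smooth algorithm, because any feasible point of \eqref{SDP_equivalent} attains value zero and such points exist by Assumption~\ref{assumption1}), the condition $f(x_N)-f^*\le\epsilon$ is simply $f(x_N)\le\epsilon$. Next I would solve $2^{-N} f(x_0)\le\epsilon$ for $N$: taking $\log_2$ of both sides yields $N\ge\log_2\frac{f(x_0)}{\epsilon}$, so after $N=\lceil\log_2\frac{f(x_0)}{\epsilon}\rceil$ outer (restart) iterations of the algorithm we are guaranteed $f(x_N)\le\epsilon$. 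Finally, each outer iteration (Step~1) consists of exactly $K=4\mu\|\mathcal{A}\|$ inner Nesterov iterations, so the total number of inner iterations to reach an $\epsilon$-solution is $K\cdot N = 4\mu\|\mathcal{A}\|\log_2\frac{f(x_0)}{\epsilon}$, as claimed.

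Concretely, I would write: by Theorem~\ref{convergenSDPsmooth}, $f(x_k)\le\frac12 f(x_{k-1})$, hence by induction $f(x_k)\le\left(\frac12\right)^k f(x_0)$. To guarantee $f(x_k)\le\epsilon$ it suffices that $\left(\frac12\right)^k f(x_0)\le\epsilon$, i.e. $k\ge\log_2\frac{f(x_0)}{\epsilon}$; thus $\log_2\frac{f(x_0)}{\epsilon}$ restarts suffice. Multiplying by the number $K=4\mu\|\mathcal{A}\|$ of iterations per restart gives the stated bound $4\mu\|\mathcal{A}\|\log_2\frac{f(x_0)}{\epsilon}$.

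There is really no substantive obstacle here — the corollary is a direct bookkeeping consequence of the geometric decay established in the theorem. The only minor points to handle cleanly are: (i) being explicit that $f^*=0$ so that "$f(x_k)-f^*\le\epsilon$" reduces to "$f(x_k)\le\epsilon$"; (ii) the harmless ceiling on $\log_2\frac{f(x_0)}{\epsilon}$, which one typically suppresses in stating complexity bounds; and (iii) noting the bound is vacuous/trivial when $f(x_0)\le\epsilon$ already. None of these require real work, so the proof is essentially the two-line argument above.
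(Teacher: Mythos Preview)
Your proposal is correct and follows essentially the same approach as the paper: invoke Theorem~\ref{convergenSDPsmooth} to get the halving $f(x_k)\le\frac12 f(x_{k-1})$, conclude that $\log_2\frac{f(x_0)}{\epsilon}$ restarts suffice, and multiply by the $K=4\mu\|\mathcal{A}\|$ inner iterations per restart. Your write-up is in fact more explicit than the paper's own proof, which omits the induction step and the remarks about $f^*=0$ and the ceiling.
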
 
\begin{proof}
Follow Theorem~\ref{convergenSDPsmooth}, after each main Step, the objective function is reduced by one haft. That implies to obtain $\epsilon-$solution of the SDP smooth formulation, we need $log_2\frac{f(x_0)}{\epsilon}$ restarts, then the number of iterations is 
$$4\mu \| \mathcal{A}\|log_2\frac{f(x_0)}{\epsilon}.$$ 
\end{proof}
%To obtain $\epsilon-$solution of SDP smooth, we need $log_2\frac{f(x_0)}{\epsilon}$ restarts and the number of iterations is 
%
%$$4\mu \| \mathcal{E}\|log_2\frac{f(x_0)}{\epsilon}.$$ 

%Issues:

%1. How to compute $\|\mathcal{E} \|$?

%1a) The norm in objective function is {\it spectral norm}, then

%$$\| \mathcal{E} \|= \| \mathcal{E} \|_{2,2}=\sqrt{\sum_{i=1}^n\| A_i\|_2^2}.$$

%1b) The norm in objective function is {\it Frobenius norm}, then

%$$\| \mathcal{E} \|= \| \mathcal{E} \|_{2,F}=\sqrt{\sum_{i=1}^n\| A_i\|_F^2}.$$

%{\bf Issues:

%1. Can we compute as follows?
%$$\| \mathcal{E} \|= \| \mathcal{E} \|_{2,F}=\sqrt{\sum_{i=1}^n\| A_i\|_F^2}.$$
%2. In Nesterov, we have to compute $f(x_k) $ and $f'(x_k)$. Can we add a Step in our algorithm to Stop it when $f(x_k) \le \epsilon$?}

\section{Uniformly linearly convergent algorithm for smooth and non-smooth formulations}
In the previous sections, we propose linearly convergent algorithms for non-smooth and smooth formulations in which both algorithms require estimating the Lipschitz constants, $M$ of the objective function and $L$ of its gradient. In this section, we present a new method which converges linearly not only for smooth formulation but also for non-smooth formulation. Moreover, this algorithm does not require any information of the problem such as the size of Lipschitz constants $L$ and $M$.

We consider a general convex programming problem of
\beq \label{CP}
f^*:=\min_{x \in \bbr^n} {f(x)},
\eeq
where the optimal value $f^*$ is known and $f(.)$ satisfies
\beq \label{smoothcondition}
f(y)-f(x)-\left \langle {f'(x),y-x}\right \rangle \le \frac{L}{2}\| y-x\|^2+M\| y-x\|, \;\; \forall x,y \in \bbr^n,
\eeq
for some $L,M \ge 0$ and $f'(x) \in \partial f(x).$ Clearly, this class of problems covers both non-smooth formulation \eqref{SDP_nonsmooth} corresponding to $L=0, M=\|\mathcal{A}\|$ and smooth formulation \eqref{SDP_obj} corresponding to $L=2\| \mathcal{A}\|^2, M=0,$ where the optimal values of both formulations are zeros. In \cite{Lan10-2}, Lan propose two algorithms which is uniformly optimal for solving both non-smooth and smooth convex programming problems. More interestingly, these algorithms do not require any smoothness information, such as the size of Lipschitz constants. We present in the next subsection a new algorithm, that can be viewed as a modification and combination of ABL and APL methods, posing uniformly linearly convergent rate for solving both smooth and non-smooth formulations and does not require any smoothness information of the problems as well. 

\subsection{The Modified ABL-APL algorithm}
The basic idea of bundle-level method is to construct a sequence of upper and lower bounds on $f^*$ whose gap converges to $0$. We introduce a gap reduction procedure which is much simple than those in ABL and APL method as follows.\\
%In our problems, the optimal value are known, i.e. $f^*=0$, then 

{\bf The Modified ABL-APL gap reduction procedure:}

{\bf Input:} $x_0^u \in \bbr^n$.

{\bf Output:} $\bar x \in \bbr^n.$

{\bf Initialize:} Set $\bar f_0 = f(x_0^u)$ and $t=1$. Also let $x_0$ be arbitrary chosen, say $x_0=x^u_0$.
%\begin{itemize}
%\item 

1) Set \beq \label{xtl_def} x_t^l=(1-\alpha_t)x^u_{t-1}+ \alpha_tx_{t-1}. \eeq

2) Update prox-center: Set
\beq \label{subprob}
x_t \in argmin\{ \|x-x_{t-1}\|^2:h(x_t^l,x) \le l\},
\eeq
where $l=f^*,$ and 
\beq \label{h_def} 
h(z,x):= f(z)+ \left \langle  {f'(z),x-z}\right \rangle.
\eeq
%\end{itemize}

3) Update upper bound: Choose $x_t^u \in \bbr^n$ such that
$$f(x_t^u) \le \min \{ \bar f_{t-1},f(\alpha_tx_t+(1-\alpha_t)x^u_{t-1})\},$$
and set $\bar f_{t}=f(x_t^u)$. In particular, denoting $\tilde x^u_t \equiv \alpha_tx_t+(1-\alpha_t)x^u_{t-1},$ set $x^u_t=\tilde x^u_t$ if $f(\tilde x_t^u) \le \bar f_{t-1}$ and $x_t^u=x_{t-1}^u$ otherwise.

4) If $\bar f_t \le \frac{1}{2} \bar f_0,$ terminate the procedure with out put $\bar x=x^u_t.$

5) Set $t=t+1,$ and Go to Step 1.\\

This procedure is a modification and combination of ABL and APL gap reduction procedures. Firstly, in comparison with the ABL gap reduction procedure, in Step 1, we do not need to update the lower bound because the optimal value is known. Secondly, we use the same level $l=f^*$ for every step and each bundle contains only one cutting plane, that makes the difficulty of our subproblem is not increased after each iteration. Finally, the selection of the stepsizes $\alpha_t$ is different from the ABL gap reduction procedure, in particular, the selection is similar to the APL gap reduction procedure. \\

The algorithm is described as follows\\

{\bf The bundle-level method:}

{\bf Input:} Initial point $p_0 \in \bbr^n$ and tolerance $\epsilon >0.$

{\bf Initialize:} Set $ub_1=f(x_0)$ and $s=1$.

1) If $ub_s \le \epsilon,$ terminate;

2) Call the gap reduction procedure with input $x_0=p_s.$

Set $p_{s+1}=\bar x, ub_{s+1}=f(\bar x),$ where $\bar x$ is the output of the gap reduction procedure.

3) Set $s=s+1$ and Go to Step 1.\\

We say that {\it a phase of the Modified ABL-APL method} occurs whenever $s$ increments $1$ and an iteration performed by gap reduction procedure will be called {\it an iteration of the Modified ABL-APL method}. \\

According to the Modified ABL-APL gap reduction procedure, after each phase, the objective value is reduced by one half, corresponding to the case we set the constant factor in ABL or APL gap reduction procedure to $0.5$. To guarantee the linear convergence of our algorithm, we need to properly specify the stepsizes $\{\alpha_t\}$. Our stepsizes policies is similar to the APL method. More specifically, we denote
\beq \label{Gamma_def}
\Gamma _t : = \left\{ \begin{array}{l}
 1,\quad \quad \quad \quad \;\,t = 1 \\ 
 \Gamma _t (1 - \alpha _t ),\quad t \ge 2 \\ 
 \end{array} \right.,
\eeq
we assume that the stepsizes $\alpha_t \in (0,1], t \ge 1,$ are chosen such that
\beq \label{stepsize1}
\alpha _1  = 1,\quad \frac{{\alpha _t^2 }}{{\Gamma _t }} \le C_1 ,\quad \Gamma _t  \le \frac{{C_2 }}{{t^2 }}{\kern 1pt} \;\;\;\mbox{and}\;\;\;\Gamma _t \left[ {\sum\limits_{\tau  = 1}^t {\left( {\frac{{\alpha _\tau  }}{{\Gamma _\tau  }}} \right)^2 } } \right]^{\frac{1}{2}}  \le \frac{{C_3 }}{{\sqrt t }},\;\forall t \ge 1,
\eeq
%for some $C_1, C_2,C_3 \in \bbr_{++}.$ In [], Lan states two different ways to specify the stepsizes $\alpha_t, t \ge 1$ as the following Lemma.
\begin{lemma} \label{2stepsizes}
a) If $\alpha_t, t \ge 1,$ are set to
\beq \label{stepsizepolicy1}\alpha_t=\frac{2}{t+1},
\eeq
then the condition \eqref{stepsize1} holds with $C_1=2,C_2=2$ and $C_3=2/{\sqrt{3}};$

b) If $\alpha_t, t \ge 1,$ are computed recursively by
\beq \label{stepsizepolicy2}\alpha_1=\Gamma_1=1, \;\;\; \alpha_t^2=(1-\alpha_t) \Gamma_{t-1}=\Gamma_t, \; \forall t \ge 2,
\eeq
then we have $\alpha_t \in (0,1]$ for any $t \ge 2.$ Moreover, condition \eqref{stepsize1} holds with $C_1=1,C_2=4$ and $C_3=4/\sqrt{3};$
\end{lemma}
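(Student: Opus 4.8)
The plan is to verify the four requirements in \eqref{stepsize1} separately for each stepsize policy, working directly from the definition \eqref{Gamma_def} of $\Gamma_t$. For part (a), with $\alpha_t = 2/(t+1)$ I would first obtain a closed form for $\Gamma_t$: since $\Gamma_1 = 1$ and $\Gamma_t = \Gamma_{t-1}(1-\alpha_t) = \Gamma_{t-1}\cdot\frac{t-1}{t+1}$, a telescoping product gives $\Gamma_t = \frac{2}{t(t+1)}$ for all $t\ge 1$. From this, $\alpha_1 = 1$ is immediate; the ratio $\alpha_t^2/\Gamma_t = \frac{4}{(t+1)^2}\cdot\frac{t(t+1)}{2} = \frac{2t}{t+1} \le 2 = C_1$; and $\Gamma_t = \frac{2}{t(t+1)} \le \frac{2}{t^2} = C_2/t^2$. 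For the last condition I would note $\alpha_\tau/\Gamma_\tau = \frac{2}{\tau+1}\cdot\frac{\tau(\tau+1)}{2} = \tau$, so $\sum_{\tau=1}^t(\alpha_\tau/\Gamma_\tau)^2 = \sum_{\tau=1}^t \tau^2 = \frac{t(t+1)(2t+1)}{6}$, and hence $\Gamma_t\bigl[\sum_{\tau=1}^t(\alpha_\tau/\Gamma_\tau)^2\bigr]^{1/2} = \frac{2}{t(t+1)}\cdot\sqrt{\frac{t(t+1)(2t+1)}{6}}$; bounding $2t+1 \le 3t$ and simplifying yields a bound of the form $\frac{C_3}{\sqrt t}$ with $C_3 = 2/\sqrt{3}$.

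For part (b), the relation $\alpha_t^2 = \Gamma_t$ is built into the definition, so the first task is to show $\alpha_t\in(0,1]$. I would argue inductively: assuming $\Gamma_{t-1}\in(0,1]$, the equation $\alpha_t^2 + \alpha_t\Gamma_{t-1} - \Gamma_{t-1} = 0$ (rewriting $\alpha_t^2 = (1-\alpha_t)\Gamma_{t-1}$) has a unique positive root $\alpha_t = \frac{-\Gamma_{t-1} + \sqrt{\Gamma_{t-1}^2 + 4\Gamma_{t-1}}}{2}$, and one checks this lies in $(0,1]$ whenever $\Gamma_{t-1}\in(0,1]$ (equality $\alpha_t=1$ only at $\Gamma_{t-1}=1$, i.e. $t=2$); since $\Gamma_t = \alpha_t^2 \le 1$, the induction closes. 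Then $C_1 = 1$ is immediate from $\alpha_t^2/\Gamma_t = 1$. For $\Gamma_t \le 4/t^2$ I would use the standard estimate for this recursion: from $\alpha_t = \sqrt{\Gamma_t}$ and $\Gamma_t = (1-\sqrt{\Gamma_t})\Gamma_{t-1}$ one derives $\frac{1}{\sqrt{\Gamma_t}} - \frac{1}{\sqrt{\Gamma_{t-1}}} \ge \frac12$ (using $\frac{1}{\sqrt{(1-a)b}} \ge \frac{1}{\sqrt b} + \frac12$ for the relevant range), so $\frac{1}{\sqrt{\Gamma_t}} \ge 1 + \frac{t-1}{2} = \frac{t+1}{2} \ge \frac t2$, giving $\Gamma_t \le 4/t^2 = C_2/t^2$. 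For the final condition, since $\alpha_\tau/\Gamma_\tau = 1/\sqrt{\Gamma_\tau}$ and $\Gamma_\tau \le 4/\tau^2$ would go the wrong way, I would instead use the matching lower bound $\Gamma_\tau \ge 1/\tau^2$ (from $\frac{1}{\sqrt{\Gamma_t}} - \frac{1}{\sqrt{\Gamma_{t-1}}} \le 1$, valid since $\alpha_t \le 1$), so $(\alpha_\tau/\Gamma_\tau)^2 = 1/\Gamma_\tau \le \tau^2$, hence $\sum_{\tau=1}^t 1/\Gamma_\tau \le \sum_{\tau=1}^t \tau^2 \le \frac{t(t+1)(2t+1)}{6} \le \frac{t^3}{2}$ for $t\ge1$; combined with $\Gamma_t \le 4/t^2$ this gives $\Gamma_t\bigl[\sum_{\tau=1}^t(\alpha_\tau/\Gamma_\tau)^2\bigr]^{1/2} \le \frac{4}{t^2}\cdot\sqrt{\frac{t^3}{2}} = \frac{4}{\sqrt{2}\sqrt t} \le \frac{4}{\sqrt 3 \sqrt t} = C_3/\sqrt t$.

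The main obstacle is part (b): unlike (a), there is no closed form for $\Gamma_t$, so every bound must come from carefully squeezing the recursion between $1/\tau^2$ and $4/\tau^2$, and one has to be careful about which direction each inequality on $\frac{1}{\sqrt{\Gamma_t}} - \frac{1}{\sqrt{\Gamma_{t-1}}}$ goes and for which range of $\Gamma_{t-1}$ it holds (the step $\frac{1}{\sqrt{(1-a)b}} \ge \frac1{\sqrt b} + \frac12$ needs $a = \sqrt b$ bounded away from $1$, which in turn requires knowing $\Gamma_{t-1}$ is already small — so the two-sided bounds and the induction must be interleaved rather than proved in sequence). The constants $C_2$ and $C_3$ are generous enough that the crude estimates $2\tau+1 \le 3\tau$ and $\Gamma_\tau \ge 1/\tau^2$ suffice; I do not expect to need anything sharper. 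Everything else is a routine telescoping or induction argument.
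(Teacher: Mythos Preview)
The paper does not actually prove this lemma: its entire proof is the citation ``See Lemma~6 in \cite{Lan10-2}.'' Your plan to verify condition \eqref{stepsize1} directly from the recursion \eqref{Gamma_def} is exactly the standard argument behind that cited result, and the overall structure (closed form for $\Gamma_t$ in (a); two-sided $1/t^2$ bounds on $\Gamma_t$ from a telescoping estimate on $1/\sqrt{\Gamma_t}$ in (b)) is correct. Two small computational slips need fixing, and one worry you raise is unnecessary.

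\textbf{Part (a), last condition.} The bound $2t+1\le 3t$ only gives
\[
\frac{2}{t(t+1)}\sqrt{\frac{t(t+1)(2t+1)}{6}}\;\le\;\frac{2}{t(t+1)}\cdot t\sqrt{\frac{t+1}{2}}=\frac{\sqrt{2}}{\sqrt{t+1}},
\]
and $\sqrt{2}>2/\sqrt{3}$, so this does \emph{not} yield $C_3=2/\sqrt{3}$. Use instead $2t+1\le 2(t+1)$: then the left side equals $\tfrac{2}{\sqrt 6}\sqrt{\tfrac{2t+1}{t(t+1)}}\le \tfrac{2}{\sqrt 6}\sqrt{\tfrac{2}{t}}=\tfrac{2}{\sqrt{3}\,\sqrt t}$, as required.

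\textbf{Part (b), last condition.} Your final chain ends with $\tfrac{4}{\sqrt 2\,\sqrt t}\le \tfrac{4}{\sqrt 3\,\sqrt t}$, which is false ($\sqrt 2<\sqrt 3$). The remedy is to keep the sharper telescoped bound $\Gamma_t\le 4/(t+1)^2$ and avoid the (also false for $t\le 3$) estimate $\sum_{\tau\le t}\tau^2\le t^3/2$. With $\sum_{\tau=1}^t\tau^2=\tfrac{t(t+1)(2t+1)}{6}$ one gets
\[
\Gamma_t\Bigl[\sum_{\tau=1}^t\bigl(\tfrac{\alpha_\tau}{\Gamma_\tau}\bigr)^2\Bigr]^{1/2}
\;\le\;\frac{4}{(t+1)^2}\sqrt{\frac{t(t+1)(2t+1)}{6}},
\]
and the target $\le \tfrac{4}{\sqrt 3\,\sqrt t}$ is equivalent to $t^2(2t+1)\le 2(t+1)^3$, i.e.\ $0\le 5t^2+6t+2$, which holds for all $t\ge 1$.

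\textbf{On the ``interleaving'' obstacle.} No interleaving is needed. From $\Gamma_t=(1-\alpha_t)\Gamma_{t-1}$ and $\alpha_t=\sqrt{\Gamma_t}$ one has the exact identity
\[
\frac{1}{\sqrt{\Gamma_t}}-\frac{1}{\sqrt{\Gamma_{t-1}}}
=\frac{\Gamma_{t-1}-\Gamma_t}{\sqrt{\Gamma_t\Gamma_{t-1}}\,(\sqrt{\Gamma_{t-1}}+\sqrt{\Gamma_t})}
=\frac{\sqrt{\Gamma_{t-1}}}{\sqrt{\Gamma_{t-1}}+\sqrt{\Gamma_t}}\in\bigl[\tfrac12,\,1\bigr),
\]
valid for every $t\ge 2$ with no smallness assumption on $\Gamma_{t-1}$; the upper and lower bounds on $\Gamma_t$ then follow by two independent telescopes.
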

\begin{proof}
See Lemma 6 in \cite{Lan10-2}.
\end{proof}
It is worth noting that these stepsizes policies does not depend on any information of $L, M$ and $f(x_0).$ Furthermore, these stepsizes $\alpha_t$ is reset to $1$ at the start of a new phase, or in the other words, we reset the stepsizes whenever the objective value decreases by one half. \\

The main convergence properties of the above algorithm are described as follows.
\begin{theorem} \label{uniform_theorem}
Suppose that $\alpha_t \in (0,1], t \ge 1,$ in the Modified ABL-APL method are chosen such that \eqref{stepsize1} holds and $p_s, s \ge 0$ are generated by Modified ABL-APL method. Then, 

1) The total number iterations performed by the Modified ABL-APL method applied to the smooth formulation \eqref{SDP_obj} can be bounded by
\beq \label{uniform_smooth}
\left\lceil {\sqrt {LC_1 C_2 \mu } } \right\rceil \log _2 \frac{{f(p_0 )}}{\varepsilon },
\eeq
 where $L$ is given by \eqref{L_def};
 
 2) The total number iterations performed by the Modified ABL-APL method applied to the non-smooth formulation \eqref{SDP_nonsmooth} can be bounded by
\beq \label{uniform_nonsmooth}
\left\lceil {4M^2 C_3^2 \mu ^2 } \right\rceil \log _2 \frac{{f(p_0 )}}{\varepsilon },
\eeq
 where $M$ is given by \eqref{M_def};
 
\end{theorem}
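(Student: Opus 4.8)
The plan is to analyze a single call of the gap reduction procedure (one \emph{phase}) and then multiply the per-phase iteration count by the number of phases. Fix a phase with input $x_0^u$, set $x_0=x_0^u$, and let $x^*$ be the projection of $x_0$ onto the feasible set $X^*$, so that $\|x_0-x^*\|=d(x_0^u,X^*)$ and $f(x^*)=f^*=0$. The key structural observation is that, because the level is the \emph{known} value $l=f^*$, the point $x^*$ is feasible for \emph{every} subproblem \eqref{subprob}: by convexity and \eqref{h_def}, $h(x_t^l,x^*)=f(x_t^l)+\langle f'(x_t^l),x^*-x_t^l\rangle\le f(x^*)=f^*=l$. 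Hence $x_t$ is the projection of $x_{t-1}$ onto a convex set containing $x^*$, which gives $\|x_t-x^*\|^2\le\|x_{t-1}-x^*\|^2-\|x_t-x_{t-1}\|^2$ and, after summation, $\sum_{\tau=1}^t\|x_\tau-x_{\tau-1}\|^2\le d^2(x_0^u,X^*)$.

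Next I would derive the one-step recursion for $\delta_t:=\bar f_t-f^*$. Applying \eqref{smoothcondition} with $x=x_t^l$ and $y=\tilde x_t^u=\alpha_tx_t+(1-\alpha_t)x_{t-1}^u$, and using $\tilde x_t^u-x_t^l=\alpha_t(x_t-x_{t-1})$ (from \eqref{xtl_def}) together with the fact that $h(x_t^l,\cdot)$ is affine with $h(x_t^l,x_t)\le l=f^*$ and $h(x_t^l,x_{t-1}^u)\le f(x_{t-1}^u)=\bar f_{t-1}$, one obtains
$$f(\tilde x_t^u)\le (1-\alpha_t)\bar f_{t-1}+\alpha_tf^*+\tfrac{L}{2}\alpha_t^2\|x_t-x_{t-1}\|^2+M\alpha_t\|x_t-x_{t-1}\|.$$
Since Step 3 guarantees $\bar f_t\le f(\tilde x_t^u)$, this yields $\delta_t\le(1-\alpha_t)\delta_{t-1}+\tfrac{L}{2}\alpha_t^2\|x_t-x_{t-1}\|^2+M\alpha_t\|x_t-x_{t-1}\|$. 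Dividing by $\Gamma_t$, using $\Gamma_t=\Gamma_{t-1}(1-\alpha_t)$ from \eqref{Gamma_def} with $\alpha_1=\Gamma_1=1$, telescoping, and then invoking the three bounds in \eqref{stepsize1} ($\alpha_\tau^2/\Gamma_\tau\le C_1$ on the $L$-term; Cauchy--Schwarz plus $\Gamma_t[\sum_\tau(\alpha_\tau/\Gamma_\tau)^2]^{1/2}\le C_3/\sqrt t$ on the $M$-term; $\Gamma_t\le C_2/t^2$) together with $\sum_\tau\|x_\tau-x_{\tau-1}\|^2\le d^2(x_0^u,X^*)$ from the previous paragraph, gives
$$\bar f_t-f^*\le \frac{LC_1C_2}{2t^2}\,d^2(x_0^u,X^*)+\frac{MC_3}{\sqrt t}\,d(x_0^u,X^*).$$

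It then remains to specialize and count. For the smooth formulation $M=0$ and $L=2\|\mathcal{A}\|^2$ by \eqref{L_def}; the growth condition \eqref{SDP_growth} gives $d^2(x_0^u,X^*)\le\mu^2f(x_0^u)=\mu^2\bar f_0$, so (using $f^*=0$) $\bar f_t\le\frac{LC_1C_2\mu^2}{2t^2}\bar f_0\le\tfrac12\bar f_0$ once $t\ge\sqrt{LC_1C_2}\,\mu$, i.e. within $\lceil\sqrt{LC_1C_2}\,\mu\rceil$ iterations, which is the per-phase bound behind \eqref{uniform_smooth}. For the non-smooth formulation $L=0$ and $M=\|\mathcal{A}\|$ by \eqref{M_def}; the growth condition \eqref{SDP_growth2} gives $d(x_0^u,X^*)\le\mu f(x_0^u)=\mu\bar f_0$, so $\bar f_t\le\frac{MC_3\mu}{\sqrt t}\bar f_0\le\tfrac12\bar f_0$ once $\sqrt t\ge 2MC_3\mu$, i.e. within $\lceil 4M^2C_3^2\mu^2\rceil$ iterations, which is the per-phase bound behind \eqref{uniform_nonsmooth}. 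Finally, each phase of the bundle-level method terminates with $ub_{s+1}=f(\bar x)=\bar f_t\le\tfrac12\bar f_0=\tfrac12 ub_s$, so at most $\log_2(f(p_0)/\varepsilon)$ phases are needed to reach $ub_s\le\varepsilon$; multiplying the per-phase counts by this number of phases produces exactly \eqref{uniform_smooth} and \eqref{uniform_nonsmooth}.

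The main obstacle is the second paragraph: setting up the one-step inequality for $\delta_t$ and carrying through the telescoping so that the three conditions of \eqref{stepsize1} combine cleanly — this is the APL-style argument, and the subtle point is that keeping the level fixed at $l=f^*$ (which is what makes each bundle a single cutting plane) is precisely what preserves feasibility of $x^*$ in every subproblem, hence the summable-displacement bound $\sum_\tau\|x_\tau-x_{\tau-1}\|^2\le d^2(x_0^u,X^*)$ that drives the whole estimate.
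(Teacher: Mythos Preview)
Your proposal is correct and follows essentially the same route as the paper: the first paragraph is the content of Lemma~\ref{distance_bound}, the second paragraph reproduces Lemma~\ref{recursion}, and the third paragraph is exactly the phase-counting argument in the paper's proof of Theorem~\ref{uniform_theorem}. Note that your per-phase smooth bound $\lceil\sqrt{LC_1C_2}\,\mu\rceil$ agrees with the paper's own derivation $T_1=\lceil\sqrt{LC_1C_2\mu^2}\rceil$; the expression $\sqrt{LC_1C_2\mu}$ appearing in the theorem statement \eqref{uniform_smooth} is evidently a typo.
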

Observe that, if the stepsizes polices \eqref{stepsizepolicy1} is chosen, then the total number of iterations performed by Modified ABL-APL applied to smooth and non-smooth formulations respectively are $$2 \sqrt{2} \| \mathcal{A}\|\mu \;\;\;\mbox{and}\;\;\; \frac{16}{3}M^2\mu^2log_2 \frac{f(x_0)}{\epsilon}.$$
On the other hand, if the stepsizes polices \eqref{stepsizepolicy2} is chosen, then the total number of iterations performed by Modified ABL-APL applied to smooth and non-smooth formulations respectively are $$2 \sqrt{2} \| \mathcal{A}\|\mu \;\;\;\mbox{and}\;\;\; \frac{64}{3}M^2\mu^2log_2 \frac{f(x_0)}{\epsilon}.$$ 

\subsection{Convergence analysis for Modified ABL-APL method}
In this section, we provide the proofs of our main results presented in the Theorem~\ref{uniform_theorem}. We first establish the convergence properties of the gap reduction procedure, which is the most important tool for proving Theorem~\ref{uniform_theorem}.

The following lemma shows that the reduction procedure generates a sequence of prox-centers $x_t$ which is "close" enough each other.
\begin{lemma} \label{distance_bound}
Suppose that $x_{\tau}, \tau =0,1,...,T,$ are the prox-centers generated by a reduction procedure, where $T$ is number of iterations performed, then we have
$$\sum_{\tau=1}^T \| x_{\tau}-x_{\tau-1}\|^2 \le \| x_0-x^*\|^2,$$
where $x^*$ is an arbitrary optimal solution of \eqref{CP}.
\end{lemma}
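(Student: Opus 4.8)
The key observation is that the prox-center update \eqref{subprob} is an orthogonal projection of $x_{t-1}$ onto the half-space $\mathcal{H}_t := \{x : h(x_t^l,x) \le l\}$ with $l = f^*$. Since $f$ is convex, $h(x_t^l,x) \le f(x)$ pointwise, so any optimal solution $x^*$ of \eqref{CP} satisfies $h(x_t^l,x^*) \le f(x^*) = f^* = l$, i.e. $x^* \in \mathcal{H}_t$ for every $t$. Thus at each step we are projecting onto a half-space that contains $x^*$, and the standard Pythagorean inequality for projections onto convex sets applies: for the projection $x_t = \Pi_{\mathcal{H}_t}(x_{t-1})$ and any point $y \in \mathcal{H}_t$,
$$\|x_t - y\|^2 + \|x_t - x_{t-1}\|^2 \le \|x_{t-1} - y\|^2.$$

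Applying this with $y = x^*$ gives $\|x_\tau - x_{\tau-1}\|^2 \le \|x_{\tau-1} - x^*\|^2 - \|x_\tau - x^*\|^2$ for each $\tau = 1,\dots,T$. Summing over $\tau$ telescopes the right-hand side:
$$\sum_{\tau=1}^T \|x_\tau - x_{\tau-1}\|^2 \le \|x_0 - x^*\|^2 - \|x_T - x^*\|^2 \le \|x_0 - x^*\|^2,$$
which is exactly the claimed bound. So the proof is essentially: (i) check feasibility $x^* \in \mathcal{H}_t$ using convexity and $l = f^*$; (ii) invoke the firm non-expansiveness / Pythagorean property of the projection; (iii) telescope.

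One should be slightly careful that the subproblem \eqref{subprob} is well-posed — i.e. $\mathcal{H}_t$ is nonempty — but this follows immediately from step (i), since $x^* \in \mathcal{H}_t$. One should also note that \eqref{subprob} minimizes $\|x - x_{t-1}\|^2$ over $\mathcal{H}_t$, so its unique minimizer genuinely is the Euclidean projection $\Pi_{\mathcal{H}_t}(x_{t-1})$, and the three-point inequality above is the standard optimality characterization of that projection (equivalently, $\langle x_{t-1} - x_t, y - x_t\rangle \le 0$ for all $y \in \mathcal{H}_t$, from which the Pythagorean inequality follows by expanding $\|x_{t-1} - y\|^2 = \|x_{t-1} - x_t\|^2 + 2\langle x_{t-1}-x_t, x_t - y\rangle + \|x_t - y\|^2$).

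**Main obstacle.** There is no serious obstacle here; this is a routine telescoping argument once the projection interpretation is made explicit. The only thing requiring a moment's thought is the feasibility claim $x^* \in \mathcal{H}_t$, which hinges on the fact that the level parameter is set to the \emph{exact} optimal value $l = f^*$ (known by assumption) rather than a computed lower bound — this is precisely what makes the "modified" procedure cleaner than the original ABL scheme, where one would need a separate argument that the running lower bound never exceeds $f^*$.
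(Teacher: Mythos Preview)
Your proof is correct and follows essentially the same approach as the paper: both arguments (i) use convexity of $f$ and $l=f^*$ to place $x^*$ in every level set $\mathcal{H}_t$, (ii) apply the Pythagorean inequality $\|x_\tau-x^*\|^2+\|x_\tau-x_{\tau-1}\|^2\le\|x_{\tau-1}-x^*\|^2$ for the projection, and (iii) telescope. The only cosmetic difference is that the paper cites the three-point inequality as ``Lemma 1 in \cite{Lan10-3}'' whereas you derive it directly from the optimality condition of the projection.
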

\begin{proof}
Denote the level sets by
$$\mathcal{L}_t:=\{ x \in \bbr^n: h(x_t^l,x) \le l\}, t=1,...,$$
and $x^*$ by an arbitrary optimal solution of \eqref{CP}. Then because of the convexity of the objective function, it is easy to see that $x^*$ is a feasible solution to \eqref{subprob} for every step, i.e. $x^* \in \mathcal{L}_t, t=1,2,...,T-1.$ Furthermore, using the Lemma 1 in \cite{Lan10-3} and the subproblem \eqref{subprob}, we have
$$\| x_{\tau}-x^*\|^2 + \| x_{\tau-1}-x_{\tau}\|^2 \le \| x_{\tau-1}-x^*\|^2, \tau =1,2,...,T.$$
Summing up the above inequalities we obtain
$$\| x_T-x^*\|^2 + \sum_{\tau=1}^T \| x_{\tau-1}-x_{\tau}\|^2 \le \| x_0-x^*\|^2, \forall x^* \in X^*.$$  
\end{proof}
The following result describes the main recursion for the Modified ABL-APL gap reduction procedure which together with the global error bound \eqref{SDP_growth} and \eqref{SDP_growth2} imply the rate of convergence of the Modified ABL-APL method.
\begin{lemma} \label{recursion}
Let $(x^l_t, x_t, x_t^u), t \ge 1,$ be the search points computed by the Modified ABL-APL gap reduction procedure. Also, let $\Gamma_t$ defined in \eqref{Gamma_def} and suppose that the stepsizes $\alpha_t, t \ge 1,$ are chosen such that the relation \eqref{stepsize1} holds. Then we have
\beq
f(x_t^u ) - f^* \le \frac{{LC_1 C_2 }}{{2t^2 }}\left\| {x_0  - x^* } \right\|^2  + \frac{{MC_3 }}{{\sqrt t }}\left\| {x_0  - x^* } \right\|
\eeq

\end{lemma}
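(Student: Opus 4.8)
The plan is to run the standard APL/ABL telescoping argument, with the one new ingredient being that the optimal value $f^*$ is known, so the "level" $l$ is fixed at $f^*$ and no lower bound needs to be tracked. First I would record the elementary identity that ties the upper-bound update to the motion of the prox-centers. Since $\tilde x^u_t = \alpha_t x_t + (1-\alpha_t)x^u_{t-1}$ and $x^l_t = (1-\alpha_t)x^u_{t-1} + \alpha_t x_{t-1}$ by \eqref{xtl_def}, subtraction gives $\tilde x^u_t - x^l_t = \alpha_t(x_t - x_{t-1})$, hence $\|\tilde x^u_t - x^l_t\| = \alpha_t\|x_t - x_{t-1}\|$. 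This is what lets the progress of $\bar f_t$ be controlled by $\|x_t - x_{t-1}\|$, which in turn is summable by Lemma~\ref{distance_bound}.

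Next I would apply the generalized smoothness inequality \eqref{smoothcondition} at $x = x^l_t$, $y = \tilde x^u_t$, which yields
$$f(\tilde x^u_t) \le h(x^l_t,\tilde x^u_t) + \tfrac{L}{2}\alpha_t^2\|x_t - x_{t-1}\|^2 + M\alpha_t\|x_t - x_{t-1}\|,$$
where $h$ is the linearization from \eqref{h_def}. Because $h(x^l_t,\cdot)$ is affine, $h(x^l_t,\tilde x^u_t) = \alpha_t h(x^l_t,x_t) + (1-\alpha_t)h(x^l_t,x^u_{t-1})$; the first term is $\le l = f^*$ since $x_t$ is feasible for the subproblem \eqref{subprob}, and the second is $\le f(x^u_{t-1}) = \bar f_{t-1}$ by convexity of $f$. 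Together with $\bar f_t = f(x^u_t) \le f(\tilde x^u_t)$ from Step 3, this gives the one-step recursion
$$\bar f_t - f^* \le (1-\alpha_t)(\bar f_{t-1} - f^*) + \tfrac{L}{2}\alpha_t^2\|x_t - x_{t-1}\|^2 + M\alpha_t\|x_t - x_{t-1}\|.$$

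I would then divide by $\Gamma_t$ and use $\Gamma_t = (1-\alpha_t)\Gamma_{t-1}$ from \eqref{Gamma_def} (the case $t=1$ being immediate since $\alpha_1 = 1$), so the first term telescopes and I get
$$\frac{\bar f_t - f^*}{\Gamma_t} \le \sum_{\tau=1}^{t} \frac{L\alpha_\tau^2}{2\Gamma_\tau}\|x_\tau - x_{\tau-1}\|^2 + \sum_{\tau=1}^{t} \frac{M\alpha_\tau}{\Gamma_\tau}\|x_\tau - x_{\tau-1}\|.$$
For the first sum, apply $\alpha_\tau^2/\Gamma_\tau \le C_1$ and Lemma~\ref{distance_bound} (with an arbitrary $x^* \in X^*$) to get the bound $\tfrac{LC_1}{2}\|x_0 - x^*\|^2$. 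For the second sum, apply Cauchy–Schwarz and then Lemma~\ref{distance_bound} to get $M\big[\sum_{\tau=1}^t(\alpha_\tau/\Gamma_\tau)^2\big]^{1/2}\|x_0 - x^*\|$. Multiplying through by $\Gamma_t$ and invoking the remaining two requirements of \eqref{stepsize1}, namely $\Gamma_t \le C_2/t^2$ and $\Gamma_t\big[\sum_{\tau=1}^t(\alpha_\tau/\Gamma_\tau)^2\big]^{1/2} \le C_3/\sqrt{t}$, produces exactly the asserted estimate.

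The only genuinely delicate point is the non-smooth ($M$) term: one must marry the $\ell_2$-summability of the prox-center increments supplied by Lemma~\ref{distance_bound} with the weights $\alpha_\tau/\Gamma_\tau$ via Cauchy–Schwarz, which is precisely why the third condition in \eqref{stepsize1} is phrased in terms of $\Gamma_t\big[\sum(\alpha_\tau/\Gamma_\tau)^2\big]^{1/2}$; everything else is bookkeeping. A second, minor point to handle with care is that Lemma~\ref{distance_bound} should be invoked with its parameter $T$ set to the current index $t$ rather than to the final iteration of the procedure, and that the feasibility $h(x^l_t,x_t)\le f^*$ used above holds at \emph{every} step by construction of \eqref{subprob}; neither of these affects the argument. \endproof
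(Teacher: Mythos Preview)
Your proposal is correct and follows essentially the same route as the paper's own proof: the identity $\tilde x^u_t - x^l_t = \alpha_t(x_t - x_{t-1})$, the smoothness inequality \eqref{smoothcondition} applied at $x^l_t$, the affine decomposition of $h(x^l_t,\tilde x^u_t)$ bounded via feasibility of $x_t$ in \eqref{subprob} and convexity of $f$, the telescoping after dividing by $\Gamma_t$, Cauchy--Schwarz on the $M$-term, and finally Lemma~\ref{distance_bound} together with \eqref{stepsize1}. If anything, you have spelled out the decomposition of $h(x^l_t,\tilde x^u_t)$ more explicitly than the paper, which merely cites these ingredients in a single line.
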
 
\begin{proof}
Denote $$\tilde x_t^u = \alpha_tx_t + (1- \alpha_t)x_{t-1}^u.$$ By definition of $x_t^l,$ we have
$$\tilde x_t^u-x_t^l=\alpha_t(x_t-x_{t-1}).$$
Using this observation, \eqref{smoothcondition}, \eqref{h_def}, \eqref{xtl_def}, \eqref{subprob} and the convexity of $f$, we have [Lemma 1]
\beq 
f(x_t^u) \le f(\tilde x_t^u) \le (1-\alpha_t)f(x^u_{t-1}) + \alpha_t l + \frac{L\alpha_t^2}{2} \| x_t-x_{t-1}\|^2 + M \alpha_t \| x_t-x_{t-1}\|.
\eeq
By subtracting $l$ from both sides of the above inequality, we obtain, for any $t \ge 1,$
\beq
f(x_t^u)-f^* \le (1-\alpha_t)[f(x_{t-1}^u)-f^* ]+ \frac{L\alpha_t^2}{2} \| x_t-x_{t-1}\|^2 + M \alpha_t \| x_t-x_{t-1}\|.
\eeq
Dividing both sides of above inequality to $\Gamma_t$ and using \eqref{Gamma_def}, \eqref{stepsize1}, we have
%\beq
\begin{align*}
 \frac{{f(x_1^u ) - f^* }}{{\Gamma _1 }} & \le \frac{{1 - \alpha _1 }}{{\Gamma _1 }}\left[ {f(x_0^u ) - f^* } \right] + \frac{{LC_1 }}{2}\left\| {x_1  - x_0 } \right\|^2  + M\frac{{\alpha _1 }}{{\Gamma _1 }}\left\| {x_1  - x_0 } \right\| \cr  
  & = \frac{{LC_1 }}{2}\left\| {x_1  - x_0 } \right\|^2  + M\frac{{\alpha _1 }}{{\Gamma _1 }}\left\| {x_1  - x_0 } \right\|
\end{align*}
%\eeq
and for any $t \ge 2$
\begin{align*}
 \frac{1}{{\Gamma _t }}\left[ {f(x_t^u ) - f^* } \right] &\le \frac{{1 - \alpha _t }}{{\Gamma _t }}\left[ {f(x_{t - 1}^u ) - f^* } \right] + \frac{{LC_1 }}{2}\left\| {x_t  - x_{t - 1} } \right\|^2  + M\frac{{\alpha _t }}{{\Gamma _t }}\left\| {x_t  - x_{t - 1} } \right\| \cr
  &= \frac{1}{{\Gamma _{t - 1} }}\left[ {f(x_{t - 1}^u ) - f^* } \right] + \frac{{LC_1 }}{2}\left\| {x_t  - x_{t - 1} } \right\|^2  + M\frac{{\alpha _t }}{{\Gamma _t }}\left\| {x_t  - x_{t - 1} } \right\| 
 \end{align*}
Summing up the above inequalities, we have, for any $t \ge 1,$
\begin{align*}
 \frac{1}{{\Gamma _t }}\left[ {f(x_t^u ) - f^* } \right] &\le \frac{{LC_1 }}{2}\sum\limits_{\tau  = 1}^t {\left\| {x_\tau   - x_{\tau  - 1} } \right\|^2 }  + M\sum\limits_{\tau  = 1}^t {\frac{{\alpha _\tau  }}{{\Gamma _\tau  }}\left\| {x_\tau   - x_{\tau  - 1} } \right\|}  \cr
  &\le \frac{{LC_1 }}{2}\sum\limits_{\tau  = 1}^t {\left\| {x_\tau   - x_{\tau  - 1} } \right\|^2 }  + M\left[ {\sum\limits_{\tau  = 1}^t {\left( {\frac{{\alpha _\tau  }}{{\Gamma _\tau  }}} \right)^2 } } \right]^{\frac{1}{2}} \left[ {\sum\limits_{\tau  = 1}^t {\left\| {x_\tau   - x_{\tau  - 1} } \right\|^2 } } \right]^{\frac{1}{2}},  
 \end{align*}
where the second inequality follows from the Cauchy-Schwartz inequality. Then from the relation \eqref{stepsize1} and the Lemma~\ref{distance_bound}, for any $t \ge 1$ and $x^* \in X^*,$ we have
\begin{align*}
 f(x_t^u ) - f^*  &\le \frac{{LC_1 \Gamma _t }}{2}\left\| {x_0  - x^* } \right\|^2  + MC_3 \Gamma _t \left[ {\sum\limits_{\tau  = 1}^t {\left( {\frac{{\alpha _\tau  }}{{\Gamma _\tau  }}} \right)^2 } } \right]^{\frac{1}{2}} \left\| {x_0  - x^* } \right\| \cr
  &\le \frac{{LC_1 C_2 }}{{2t^2 }}\left\| {x_0  - x^* } \right\|^2  + \frac{{MC_3 }}{{\sqrt t }}\left\| {x_0  - x^* } \right\|  
 \end{align*}
\end{proof}
Now we are ready to prove the Theorem~\ref{uniform_theorem}.\\

\noindent{\bf Proof of Theorem~\ref{uniform_theorem}:} We will show that the MABL method obtain linear convergence rate for both smooth and non-smooth formulations. \\

First, we consider the smooth formulation, i.e. $M=0$. By the Lemma~\ref{recursion} and the error bound \eqref{SDP_growth}, note that $M=0,$ we have, for any $t \ge 1,$
\beq
f(x_t^u ) - f^*  \le \frac{{LC_1 C_2 }}{{2t^2 }}\left\| {x_0  - x^* } \right\|^2  \le \frac{{LC_1 C_2 \mu ^2 }}{{2t^2 }}\left[ {f(x_0^u ) - f^* } \right],
\eeq
that implies, for any $s \ge 1$
\beq
f(p_s)-f^* \le \frac{{LC_1 C_2 \mu ^2 }}{{2t^2 }}\left[ {f(p_{s-1} ) - f^* } \right].
\eeq
Then the number of iterations performed by reduction procedure each phase is bounded by $T_1,$ where
\beq
T_1 = \left\lceil {\sqrt {LC_1 C_2 \mu ^2 } } \right\rceil. 
\eeq
 After each phase, the objective function value is decreased by one half, then the number of phase is
 $$\max \{ 0, log_2 \frac{f(p_0)}{\epsilon}\}.$$
 Then Part 1 of Theorem~\ref{uniform_theorem} is automatically follows.\\
 
 Second, we consider the non-smooth formulation, i.e. $L=0$. By the Lemma~\ref{recursion} and the error bound \eqref{SDP_growth2}, note that $L=0,$ we have, for any $t \ge 1,$
\beq
f(x_t^u ) - f^*  \le \frac{{MC_3 }}{{\sqrt t }}\left\| {x_0  - x^* } \right\| \le \frac{{MC_3 \mu }}{{\sqrt t }}\left[ {f(x_0^u ) - f^* } \right],
\eeq
that implies, for any $s \ge 1,$
\beq 
f(p_s ) - f^*  \le \frac{{MC_3 \mu }}{{\sqrt t }}\left[ {f(p_{s-1} ) - f^* } \right].
\eeq
Then the number of iterations performed by reduction procedure each phase is bounded by $T_2,$ where
\beq
T_2 = \left\lceil {(2MC_3 \mu)^2   } \right\rceil. 
\eeq
 After each phase, the objective function value is decreased by one half, then the number of phase is
 $$\max \{ 0, log_2 \frac{f(p_0)}{\epsilon}\}.$$
 Then Part 2 of Theorem~\ref{uniform_theorem} is automatically follows.
 
\section{A special case}%Linearly convergent algorithm for linear system of inequalities}%Special case of Linear Matrix Inequalities}
In this section, we consider a linear system of inequalities, which is a special case of Linear Matrix Inequalities. % such as Quadratic programming and 
%The ideas of algorithms for these problems are the same as smooth SDP formulation. The iteration complexity results is... because of the special structure of Quadratic problems and linear system of inequalities.
 Interestingly, we still preserve a linearly convergent algorithm for solving linear inequalities system with a weaker assumption than Assumption~\ref{assumption1}. For convenience, in this section, we present a smooth formulation and the smooth algorithm for solving linear system of inequalities.
%\subsection{Linearly convergent algorithm for Quadratic programming}

%\subsection{}
Consider the linear inequalities system
 \beq \label{linearsystem}
Ax \le b, \eeq
 or 
 $$
\left\{ \begin{array}{l}
 a_i^T x \le b_i \quad \left( {i \in I_ \le  } \right) \\ 
 a_i^T x = b_i \quad \left( {i \in I_ =  } \right) \\ 
 \end{array} \right.
$$
where $A$ is $m \times n$ matrix, $I_ \le, I_ =$ are index sets corresponding to inequalities and equalities. We assume that the following assumption holds.% that the system has a strictly feasible solution set.
 \begin{assumption}
 The feasible solution set of \eqref{linearsystem} is non empty.
 \end{assumption}
Note that this assumption is weaker than Assumption~\ref{assumption1} which requires the strict feasibility on the feasible solution set of LMIs. \\

We introduce the function $e: \bbr^m \rightarrow \bbr^m$ such that
$$
e(y)_i  = \left\{ \begin{array}{l}
 y_i^ +  \quad \left( {i \in I_ \le  } \right) \\ 
 y_i \quad \;\left( {i \in I_ =  } \right) \\ 
 \end{array} \right.,
$$
where
$$y_i^ +   = \max \left\{ {0,y_i } \right\}.
$$
Then, the equivalent optimization problem of linear inequalities system \eqref{linearsystem} is minimizing the objective function
\beq \label{smooth_ieq}f(x)=\frac{1}{2}\| e(Ax-b)\|^2, \eeq
Note that $x^*$ is a solution of \eqref{linearsystem} if and only if $x^*$ is optimal solution of \eqref{smooth_ieq}, and
$f(x^*)=0.$\\

%Similar to the linear equalities system, the error bound (growth condition) for the inequalities plays an important role in convergence analysis. 
%We would like to use the deterministic algorithm in the previous section to solve the unconstrained problem (IV) in which the objective function has a Lipschitz continuous gradient. The Lipschitz continuity property of objective gradient function 
The smoothness of this the objective function $f(x)$ is described in the following lemma.
\begin{lemma} \label{Lipschitz_gradient}
Given a matrix $A \in \bbr^{m \times n}$ and a vector $b \in \bbr^m$, the objective function given in \eqref{smooth_ieq} has a $\|A\|^2-$Lipschitz continuous gradient.
\end{lemma}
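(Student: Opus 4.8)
The plan is to compute the gradient of $f(x)=\frac{1}{2}\|e(Ax-b)\|^2$ explicitly and then bound how much this gradient can change. First I would observe that $f$ is a composition of the affine map $x\mapsto Ax-b$ with the function $g(y)=\frac{1}{2}\|e(y)\|^2$, so by the chain rule $\nabla f(x)=A^T\nabla g(Ax-b)$. The key elementary fact is that $g$ has a $1$-Lipschitz gradient: componentwise, $g(y)=\frac{1}{2}\sum_{i\in I_\le}(y_i^+)^2+\frac{1}{2}\sum_{i\in I_=}y_i^2$, whose gradient is $\nabla g(y)=e(y)$ (since $\frac{d}{dt}\frac{1}{2}(t^+)^2=t^+$). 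Thus it suffices to show that $y\mapsto e(y)$ is $1$-Lipschitz, i.e. $\|e(y)-e(z)\|\le\|y-z\|$. This follows coordinatewise: for $i\in I_=$ the map is the identity, and for $i\in I_\le$ the map $t\mapsto t^+$ is nonexpansive because it is the projection onto $[0,\infty)$ (equivalently, $|t^+-s^+|\le|t-s|$ by a direct case check). Summing the squared coordinate inequalities gives the claim for $e$, hence $\nabla g$ is $1$-Lipschitz.

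Having established that $\nabla g$ is $1$-Lipschitz, the remaining step is the standard estimate for compositions with linear maps: for any $x_1,x_2\in\bbr^n$,
\beqas
\|\nabla f(x_1)-\nabla f(x_2)\| &=& \|A^T(\nabla g(Ax_1-b)-\nabla g(Ax_2-b))\| \\
&\le& \|A\|\,\|\nabla g(Ax_1-b)-\nabla g(Ax_2-b)\| \\
&\le& \|A\|\,\|A x_1-A x_2\| \le \|A\|^2\,\|x_1-x_2\|,
\eeqas
which is exactly the $\|A\|^2$-Lipschitz bound claimed. Here $\|A\|$ is the operator norm induced by the Euclidean norms on $\bbr^n$ and $\bbr^m$, and I would note this is consistent with the convention used earlier in the paper (analogous to \eqref{Edef}), and that this same argument is the specialization to the linear/Euclidean setting of Proposition~1 of \cite{LaLuMo11-1} used in Section~4.

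I do not anticipate a serious obstacle here; the only point requiring a little care is the identification $\nabla g(y)=e(y)$ at the kink $y_i=0$ for $i\in I_\le$, where one should check that the one-sided derivatives of $\frac{1}{2}(t^+)^2$ agree (both equal $0$), so $g$ is genuinely $C^1$ and the chain rule applies without subgradient technicalities. Everything else is routine: the nonexpansiveness of $t\mapsto t^+$, the Pythagorean summation over coordinates, and the submultiplicativity of the operator norm.
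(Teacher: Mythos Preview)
Your proof is correct and follows essentially the same approach as the paper's: both compute $\nabla f(x)=A^T e(Ax-b)$, establish that $y\mapsto e(y)$ is $1$-Lipschitz, and then apply operator-norm bounds to get the $\|A\|^2$ constant. The only cosmetic difference is that the paper phrases the $1$-Lipschitz step via projections---writing $e(y)=y-\Pi_C(y)$ for the cone $C=\{y:y_i\le 0,\ i\in I_\le;\ y_i=0,\ i\in I_=\}$ and invoking nonexpansiveness of $I-\Pi_C$ (citing Proposition~15 of \cite{LaLuMo11-1})---whereas you verify it directly coordinatewise, which is slightly more elementary.
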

\begin{proof}
Denote $$C=\{ y \in \bbr^m: y_i \le 0 \;\mbox{for} \; i \in I_{\le}, y_i = 0 \;\mbox{for} \; i \in I_=\}.$$
Then $\| e(y)\|$ is the distance form a point $y$ to the closed, convex set $C$. Using Proposition 15 in \cite{LaLuMo11-1} it can be shown that $\| e(Ax-b)\|^2$ is differentiable with derivative given by 
$$\nabla f(x)=A^T(y-\Pi_C(y)), \; y \in \bbr^m$$
where $y=Ax-b$ and $\Pi_C(y)$ is projection of $y$ on $C.$

We have
\begin{align*}
&\| A^T(y_1 - \Pi_C (y_1))-A^T (y_2 - \Pi_C (y_2))\| \cr
&\le \| A\| \|[(y_1 - \Pi_C (y_1))-(y_2 - \Pi_C (y_2))]\| \cr
& \le \|A\| \| y_1 -y_2\| \cr
& = \| A\| \| A(x_1 -x_2)\| \cr
& \le \| A\|^2 \| x_1 -x_2\| \cr
\end{align*}
That implies 
$$\|\nabla f(x_1) - \nabla f(x_2) \| \le \| A\|^2 \| x_1-x_2\|.$$
\end{proof}

%That error bound was proposed by Hoffman in [...]
The growth condition of the objective function is described in the following lemma which was proposed by Hoffman, see \cite{Hoffman52}.
\begin{lemma} \label{errorbound2}
For any right-hand side vector $b \in \bbr^m$, let $S_b$ be the set of feasible solutions of the linear system \eqref{smooth_ieq}. Then there exists a constant $L_H$, independent of b, with the following property:
\beq \label{LS_growth}
x \in \bbr^m \; \mbox{and} \; S_b \ne \emptyset \Rightarrow d(x,S_b) \le L_H \| e(Ax-b)\|.
\eeq
\end{lemma}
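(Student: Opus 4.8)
The plan is to reduce the statement to the classical Hoffman lemma for polyhedra and then remove the dependence of the constant on $b$ by a compactness/polyhedral argument. First I would fix the data $A$, the partition $(I_\le, I_=)$ once and for all, since $L_H$ is only required to be independent of the right-hand side $b$. For a fixed $b$ with $S_b \neq \emptyset$, the classical result of Hoffman states that there is a constant $\lambda(A)$, depending only on $A$ (and the partition), such that for every $x$,
\beq
d(x, S_b) \le \lambda(A)\,\big\| e(Ax - b) \big\|,
\eeq
where $e(Ax-b)$ collects exactly the violations $\big(a_i^Tx - b_i\big)^+$ for $i \in I_\le$ and $a_i^Tx - b_i$ for $i \in I_=$. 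Thus the content of the lemma is precisely that the Hoffman constant can be taken uniform over all feasible right-hand sides, which is exactly what the classical statement already delivers, so the bulk of the work is to establish (or cite) the classical bound with an explicit $A$-only constant.

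The key steps, in order, would be: (1) Observe that $\|e(Ax-b)\|$ is the residual function whose zero set is exactly $S_b$, and that it is piecewise linear and convex in $x$. (2) Invoke the finite characterization: any nonempty polyhedron $S_b = \{x : a_i^Tx \le b_i\ (i\in I_\le),\ a_i^Tx = b_i\ (i\in I_=)\}$ can be analyzed via the finitely many index subsets $J \subseteq I_\le \cup I_=$ for which the corresponding equality subsystem is consistent; for each such $J$, linear-algebra bounds give a constant depending only on the submatrices $A_J$. (3) Take $L_H := \lambda(A)$ to be the maximum of these finitely many constants (equivalently, the reciprocal of the minimal singular value over all full-row-rank submatrices of $A$, a standard closed form). (4) Conclude by the projection/nearest-point argument: given $x$, let $\bar x = \Pi_{S_b}(x)$; walk from $x$ toward $\bar x$ activating constraints, and bound the distance travelled by the residual at $x$ times $L_H$, using that $\bar x$ satisfies all constraints with equality on the binding set. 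Since this is precisely Hoffman's 1952 theorem, the cleanest write-up simply cites \cite{Hoffman52} and records that the constant there depends on $A$ alone.

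The main obstacle — or rather, the only delicate point — is making the independence from $b$ transparent: one must be careful that the constant extracted from the classical argument genuinely does not see $b$, which it does not because it is built from the singular values of submatrices of $A$ and from the combinatorics of which constraint subsets can be simultaneously active, neither of which involves $b$. A secondary bookkeeping point is the presence of equality constraints alongside inequalities; this is handled by the standard device of writing each equality $a_i^Tx = b_i$ as the pair $a_i^Tx \le b_i$, $-a_i^Tx \le -b_i$, which does not change the residual $\|e(Ax-b)\|$ up to the obvious identification, so Hoffman's lemma for pure inequality systems applies verbatim. Given that the paper already cites \cite{Hoffman52}, I would keep the proof to essentially these observations rather than reproving the polyhedral estimate.
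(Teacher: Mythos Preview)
Your proposal is correct and matches the paper's treatment: the paper does not give an independent proof of this lemma at all but simply attributes it to Hoffman with the citation \cite{Hoffman52}, and your write-up ultimately does the same while adding a (sound) sketch of why the constant depends only on $A$ and the index partition. If anything, your version is more detailed than what the paper records; trimming it to the citation plus the one-line remark that the classical Hoffman constant is built from submatrices of $A$ alone would align it exactly with the paper.
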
 
The objective function can be viewed as an error measure function which determines the errors in the corresponding equalities or inequalities of a given arbitrary point. This lemmas provide an error bound for the distance from a arbitrary point to the feasible solution set of \eqref{smooth_ieq}. The minimum constant $L_H$ satisfies the growth condition \eqref{LS_growth} is called the Hoffman constant which is well studied in \cite{Zhang99}, \cite{Pang97}, \cite{Li93}, \cite{GulHoffRoth95} and \cite{ZhengNg04}. That constant can be easily estimated in some cases, especially in linear system of equations. In that case, the Hoffman constant is the smallest non-zero singular value of the matrix A. 

The algorithm for linear system of inequalities is same as the smooth algorithm in Section 4. In particular, we restart the Nesterov accelerate gradient method after each $K=\sqrt{8\| A\|^2L_H^2}.$ The algorithm scheme is described as follows.

\begin{algorithm}

Step 0: Initial solution $x_0 \in \bbr^n.$

Step 1: $k^{th}$ iteration, $k \ge 1.$

\qquad \qquad Run Nesterov accelerate gradient method with initial solution $\bar x_0=x_{k-1}$ for $K=\sqrt{8\| A\|^2L_H^2}$ iterations.

\qquad \qquad $x_k:=\bar x_K$.

Step 2: Go to Step 1.

\end{algorithm}
The convergence property is described in following Theorem.
\begin{theorem} \label{convergence_LS}
For any $k \ge 1,$
$$f(x_k) \le \frac{1}{2}f(x_{k-1}).$$
\end{theorem}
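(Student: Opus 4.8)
The plan is to follow exactly the same template used in the proof of Theorem~\ref{convergenSDPsmooth}, now invoking the Hoffman error bound from Lemma~\ref{errorbound2} in place of Lemma~\ref{SDP_growth}. First I would recall that each main step runs Nesterov's accelerated gradient method for $K = \sqrt{8\|A\|^2 L_H^2}$ iterations on the objective $f(x) = \tfrac12\|e(Ax-b)\|^2$, which by Lemma~\ref{Lipschitz_gradient} has an $\|A\|^2$-Lipschitz continuous gradient, and that the optimal value is $f^*=0$ since the feasible set $S_b$ is nonempty. Applying the standard convergence estimate for the accelerated method with initial point $\bar x_0 = x_{k-1}$, after $K$ iterations one obtains
\beq
f(x_k) - f^* \le \frac{2\|A\|^2\, d^2(x_{k-1}, S_b)}{K^2},
\eeq
using that $d(x_{k-1},S_b)$ bounds the distance to the minimizer set (the constant in front is the Lipschitz constant, times two, over $K^2$; I would match it to whichever normalization the paper uses for Nesterov's method, consistent with the $8\|\mathcal{A}\|^2$ appearing in Theorem~\ref{convergenSDPsmooth}).

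Next I would substitute the Hoffman bound. Lemma~\ref{errorbound2} gives $d(x_{k-1},S_b) \le L_H\|e(Ax_{k-1}-b)\|$, and since $f(x) = \tfrac12\|e(Ax-b)\|^2$ this is exactly $d^2(x_{k-1},S_b) \le L_H^2 \cdot 2 f(x_{k-1}) = 2 L_H^2 f(x_{k-1})$. Plugging this in yields
\beq
f(x_k) \le \frac{2\|A\|^2 \cdot 2 L_H^2 f(x_{k-1})}{K^2} = \frac{4\|A\|^2 L_H^2 f(x_{k-1})}{K^2}.
\eeq
With the prescribed restart length $K = \sqrt{8\|A\|^2 L_H^2}$, so that $K^2 = 8\|A\|^2 L_H^2 \ge 2 \cdot 4\|A\|^2 L_H^2$, the factor $4\|A\|^2 L_H^2 / K^2$ is at most $1/2$, giving $f(x_k) \le \tfrac12 f(x_{k-1})$ as claimed. (If $K$ must be an integer one takes the ceiling, which only makes $K^2$ larger and preserves the inequality.)

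The only genuinely delicate point, and the one I would be careful about, is making sure the constants line up: the exact coefficient in Nesterov's rate, the factor of $2$ relating $f$ to $\|e(Ax-b)\|^2$, and the factor of $2$ relating $d^2$ to $f$ via Hoffman must combine so that $K^2 = 8\|A\|^2 L_H^2$ is indeed enough to force contraction by a half — this is a bookkeeping matter rather than a conceptual obstacle, but it is exactly where an off-by-a-constant error could slip in, so I would state the accelerated-gradient estimate in the precise form used earlier in the paper (the $8\|\mathcal{A}\|^2 d^2/K^2$ bound) and verify the arithmetic once. Everything else is a verbatim transcription of the Theorem~\ref{convergenSDPsmooth} argument, with $\mathcal{A}\mapsto A$, $\mu \mapsto \sqrt{2}\,L_H$ (or rather with the growth constant replaced by the Hoffman constant), and $X^* \mapsto S_b$.
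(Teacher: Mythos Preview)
Your proposal is correct and follows exactly the same template as the paper: apply Nesterov's accelerated convergence estimate to the inner run started at $\bar x_0=x_{k-1}$, replace $d^2(x_{k-1},S_b)$ using the Hoffman bound of Lemma~\ref{errorbound2}, and then use $K^2=8\|A\|^2L_H^2$ to produce the factor $\tfrac12$. You are in fact more careful than the paper about the factor of $2$ coming from $f(x)=\tfrac12\|e(Ax-b)\|^2$ (so that $d^2\le 2L_H^2 f$); the paper's displayed intermediate bound $\frac{4\|A\|^2L_H^2}{(K+2)^2}f(x_{k-1})$ silently absorbs that factor, so your flagged ``bookkeeping'' concern is exactly the right place to be vigilant.
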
  
\begin{proof}
By convergence properties of Nesterov accelerate gradient method \cite{Nest83-1}, we have
$$f(x_k)-f^* \le \frac{4\| A\|^2}{(K+2)^2} \| x_{k-1}-x^* \|^2.$$
Using the Hoffman error bound and note that $f^*=0,$ we obtain
\begin{align*}
f(x_k) &\le \frac{4\| A\|^2L_H^2}{(K+2)^2}f(x_{k-1}) \cr
& \le \frac{1}{2}f(x_{k-1}),
\end{align*}
where the second inequality is followed by the choice of iterations number $K.$
\end{proof}
The following iterations complexity result is an immediate consequence of Theorem~\ref{convergence_LS}.
\begin{corollary}
Let $\{ x_k\}$ be the sequence generated by the smooth algorithm. Given any $\epsilon >0$, an iterate $x_k$ satisfying $f(x_k)-f^* \le \epsilon$ can be found in no more than
$$\sqrt{8\| A\|^2L_H^2}log_2\frac{f(x_0)}{\epsilon}$$
iterations.
\end{corollary}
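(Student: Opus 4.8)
The plan is to mirror exactly the structure used in the proof of Theorem~\ref{convergence_LS}, which has already done all the real work; this corollary is a routine unrolling of the one-half contraction. First I would observe that Theorem~\ref{convergence_LS} gives $f(x_k) \le \tfrac{1}{2} f(x_{k-1})$ for every $k \ge 1$, and since $f^* = 0$ this is a pure geometric decrease: iterating yields $f(x_k) \le 2^{-k} f(x_0)$. Hence to force $f(x_k) - f^* = f(x_k) \le \epsilon$ it suffices to take $k$ with $2^{-k} f(x_0) \le \epsilon$, i.e. $k \ge \log_2 \frac{f(x_0)}{\epsilon}$; so $\left\lceil \log_2 \frac{f(x_0)}{\epsilon} \right\rceil$ restarts of Step~1 suffice.

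Next I would convert restarts into iterations. Each main step (each increment of $k$) runs the Nesterov accelerated gradient method for exactly $K = \sqrt{8\|A\|^2 L_H^2}$ inner iterations, by construction of the algorithm. Therefore the total number of inner iterations needed to reach an $\epsilon$-solution is the product
\[
K \cdot \log_2 \frac{f(x_0)}{\epsilon} = \sqrt{8\|A\|^2 L_H^2}\,\log_2 \frac{f(x_0)}{\epsilon},
\]
which is the claimed bound. This is precisely the argument template already used for the two earlier complexity corollaries (following Theorems~\ref{nonsmooth_convergence} and \ref{convergenSDPsmooth}), so I would phrase it the same way for consistency.

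There is essentially no obstacle here — the content was discharged in Theorem~\ref{convergence_LS}. The only point that deserves a word of care is the ceiling/rounding: $K$ as written is not an integer in general, and the number of phases is $\lceil \log_2 (f(x_0)/\epsilon) \rceil$ rather than the bare logarithm, so strictly the stated bound should be read with an implicit ceiling (or one should note that $\sqrt{8\|A\|^2 L_H^2}$ is replaced by $\lceil \sqrt{8\|A\|^2 L_H^2}\,\rceil$ in the algorithm). Since the earlier corollaries in the paper adopt the same mild abuse, I would simply follow suit and state: after each main step the objective is halved, so $\log_2 \frac{f(x_0)}{\epsilon}$ restarts suffice, giving $\sqrt{8\|A\|^2 L_H^2}\,\log_2 \frac{f(x_0)}{\epsilon}$ iterations in total.
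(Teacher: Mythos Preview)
Your proposal is correct and follows exactly the same template the paper uses for the analogous corollaries after Theorems~\ref{nonsmooth_convergence} and~\ref{convergenSDPsmooth}: halve the objective each main step, so $\log_2\frac{f(x_0)}{\epsilon}$ restarts suffice, and multiply by the $K=\sqrt{8\|A\|^2 L_H^2}$ inner iterations per restart. The paper itself does not spell out a separate proof for this final corollary (it is declared an immediate consequence of Theorem~\ref{convergence_LS}), so your write-up is in fact more explicit than the paper while matching its approach.
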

\section{Conclusions}    
We present two formulations for a Linear Matrix Inequalities called smooth and non-smooth formulations. We propose two new first-order algorithms respectively, the SDP smooth algorithm and the SDP non-smooth algorithm, to solve these problem which can obtain a linear convergence rate. Basically, the idea of these algorithms is restarting an optimal method for smooth and non-smooth convex problem after a constant number of iterations and a global error bound for Linear Matrix Inequalities. These algorithms require knowledge about the smoothness parameters of the convex problems. We also introduce an uniformly linearly convergent algorithm for both formulations namely Modified ABL-APL method. This algorithm is a modification and combination of two algorithms, ABL and APL, proposed by Lan in \cite{Lan10-2}. Furthermore, no smoothness information of problem such as Lipschitz constant $L$ or $M$ is required. A special case of Linear Matrix Inequalities, Linear system of inequalities, is also considered in which we still obtain a linearly convergent algorithm under a weaker assumption than that in Linear Matrix Inequalities.  
\bibliographystyle{plain}
\bibliography{glan-bib}

\newcommand{\noopsort}[1]{} \newcommand{\printfirst}[2]{#1}
  \newcommand{\singleletter}[1]{#1} \newcommand{\switchargs}[2]{#2#1}
\begin{thebibliography}{10}

\bibitem{BenNem00}
A.~Ben-Tal and A.~Nemirovski.
\newblock {\em Lectures on Modern Convex Optimization: Analysis, Algorithms,
  Engineering Applications}.
\newblock MPS-SIAM Series on Optimization. SIAM, Philadelphia, 2000.

\bibitem{De97}
Sien Deng.
\newblock Computable error bounds for convex inequality systems in reflexive
  banach spaces.
\newblock {\em SIAM J. on Optimization}, 7:274--279, 1997.

\bibitem{DeHu99}
Sien Deng and Hui Hu.
\newblock Computable error bounds for semidefinite programming.
\newblock {\em Journal of Global Optimization}, 14:105--115, 1999.

\bibitem{GulHoffRoth95}
Osman Guler, Alan~J. Hoffman, and Uriel~G. Rothblum.
\newblock Approximations to solutions to systems of linear inequalities.
\newblock {\em SIAM J. Matrix Anal. Appl.}, 16:688--696, 1995.

\bibitem{HeRe99}
C.~Helmberg and F.~Rendl.
\newblock A spectral bundle method for semidefinite programming.
\newblock {\em SIAM J. on Optimization}, 10:673--696, 1999.

\bibitem{HeReVaWo96}
Christoph Helmberg, Franz Rendl, Robert~J. Vanderbei, and Henry Wolkowicz.
\newblock An interior-point method for semidefinite programming.
\newblock {\em SIAM Journal on Optimization}, 6:342--361, 1996.

\bibitem{Hoffman52}
A.~Hoffman.
\newblock On approximate solutions of systems of linear inequalities.
\newblock {\em Journal of Research of the National Bureau of Standards, Section
  B. Math. Sci.}, 49:263–265, 1952.

\bibitem{JoYe05}
A.~Jourani and J.J. Ye.
\newblock Error bounds for eigenvalue and semidefinite matrix inequality
  systems.
\newblock {\em Mathematical Programming}, 104:525--540, 2005.

\bibitem{Lan10-3}
G.~Lan.
\newblock An optimal method for stochastic composite optimization.
\newblock {\em Mathematical Programming}, 2010.
\newblock Forthcoming, online first, DOI: 10.1007/s10107-010-0434-y.

\bibitem{Lan10-2}
G.~Lan.
\newblock Bundle-type methods uniformly optimal for smooth and non-smooth
  convex optimization.
\newblock Manuscript, Department of Industrial and Systems Engineering,
  University of Florida, Gainesville, FL 32611, USA, November 2010.

\bibitem{LaLuMo11-1}
G.~Lan, Z.~Lu, and R.~D.~C. Monteiro.
\newblock Primal-dual first-order methods with {${\cal O}(1/\epsilon)$}
  iteration-complexity for cone programming.
\newblock {\em Mathematical Programming}, 126:1--29, 2011.

\bibitem{LeLe08}
D.~Leventhal and A.~S. Lewis.
\newblock Randomized methods for linear constraints: Convergence rates and
  conditioning.
\newblock {\em CoRR}, abs/0806.3015, 2008.

\bibitem{Li93}
Wu~Li.
\newblock The sharp lipschitz constants for feasible and optimal solutions of a
  perturbed linear program.
\newblock {\em Linear Algebra and its Applications}, 187:15 -- 40, 1993.

\bibitem{LuStZh98}
Z.~Luo, J.~Sturm, and S.~Zhang.
\newblock Superlinear convergence of a symmetric primal-dual path following
  algorithm for semidefinite programming.
\newblock {\em SIAM Journal on Optimization}, 8:59--81, 1998.

\bibitem{LuTs93a}
Zhi-Quan Luo and Paul Tseng.
\newblock Error bound and reduced-gradient projection algorithms for convex
  minimization over a polyhedral set.
\newblock {\em SIAM Journal on Optimization}, 3:43--59, 1993.

\bibitem{LuTs92}
Z.Q. Luo and P.~Tseng.
\newblock On the convergence of the coordinate descent method for convex
  differentiable minimization.
\newblock {\em Journal of Optimization Theory and Applications}, 72:7--35,
  1992.

\bibitem{Nem05-1}
A.~Nemirovski.
\newblock Prox-method with rate of convergence $o(1/t)$ for variational
  inequalities with lipschitz continuous monotone operators and smooth
  convex-concave saddle point problems.
\newblock {\em SIAM Journal on Optimization}, 15:229--251, 2005.

\bibitem{nene88_te1}
Y.~Nesterov and A.~Nemirovski.
\newblock A general approach to polynomial-time algorithms design for convex
  programming.
\newblock Technical report, Centr. Econ. and Math. Inst., USSR Acad. Sci.,
  Moscow, USSR, 1988.

\bibitem{nene90_te1}
Y.~Nesterov and A.~Nemirovski.
\newblock Optimization over positive semidefinite matrices: Mathematical
  background and user's manual.
\newblock Technical report, USSR Acad. Sci. Centr. Econ. and Math. Inst.,
  Moscow, USSR, 1990.

\bibitem{nene90_te2}
Y.~Nesterov and A.~Nemirovski.
\newblock Self-concordantfunctions andpolynomial time methods in
  convexprogramming.
\newblock Technical report, Centr. Econ. and Math. Inst., USSR Acad. Sci.,
  Moscow, USSR, 1990.

\bibitem{nene91_te1}
Y.~Nesterov and A.~Nemirovski.
\newblock Conic formulation of a convex programming problem and duality.
\newblock Technical report, Centr. Econ. and Math. Inst., USSR Acad. Sci.,
  Moscow, USSR, 1991.

\bibitem{Nest83-1}
Y.~E. Nesterov.
\newblock A method for unconstrained convex minimization problem with the rate
  of convergence {$O(1/k^2)$}.
\newblock {\em Doklady AN SSSR}, 269:543--547, 1983.

\bibitem{Nest04}
Y.~E. Nesterov.
\newblock {\em Introductory Lectures on Convex Optimization: a basic course}.
\newblock Kluwer Academic Publishers, Massachusetts, 2004.

\bibitem{Nest05-1}
Y.~E. Nesterov.
\newblock Smooth minimization of nonsmooth functions.
\newblock {\em Mathematical Programming}, 103:127--152, 2005.

\bibitem{Nest06-1}
Y.~E. Nesterov.
\newblock Smoothing technique and its applications in semidefinite
  optimization.
\newblock {\em Mathematical Programming}, 110:245--259, 2007.

\bibitem{NeNe94}
Y.~E. Nesterov and A.~S. Nemirovski.
\newblock {\em Interior point Polynomial algorithms in Convex Programming:
  Theory and Applications}.
\newblock Society for Industrial and Applied Mathematics, Philadelphia, 1994.

\bibitem{Pang97}
Jong-Shi Pang.
\newblock Error bounds in mathematical programming.
\newblock {\em Mathematical Programming}, 79:299--332, 1997.

\bibitem{BoElFeBa94}
Boyd Stephen, El~Ghaoui Laurent, Feron Eric, and Balakrishnan Venkataramanan.
\newblock Linear matrix inequalities in system and control theory.
\newblock 1994.

\bibitem{StZh01}
J.F. Sturm and S.~Zhang.
\newblock On sensitivity of central solutions in semidefinite programming.
\newblock {\em Mathematical Programming}, 90:205--227, 2001.

\bibitem{LuTs95}
Paul Tseng.
\newblock On linear convergence of iterative methods for the variational
  inequality problem.
\newblock {\em Journal of Computational and Applied Mathematics}, 60:237--252,
  1995.

\bibitem{BaVa98}
L.~Vandenberghe V.~Balakrishnan.
\newblock Linear matrix inequalities for signal processing. an overview.
\newblock pages 412--417), year={1998},.

\bibitem{VanBoy94}
Lieven Vandenberghe and Stephen Boyd.
\newblock Semidefinite programming.
\newblock {\em SIAM Review}, 38:49--95, 1994.

\bibitem{Zhang99}
Shuzhong Zhang.
\newblock Global error bounds for convex conic problems.
\newblock {\em SIAM J. on Optimization}, 10:836--851, 1999.

\bibitem{ZhengNg04}
Xi~Yin Zheng and Kung~Fu Ng.
\newblock Hoffman"s least error bounds for systems of linear inequalities.
\newblock {\em J. of Global Optimization}, 30:391--403, 2004.

\end{thebibliography}
\end{document}